\newtheorem{theorem}{Theorem}[section]
\newtheorem{lemma}[theorem]{Lemma}
\newtheorem{proposition}[theorem]{Proposition}
\newtheorem{corollary}[theorem]{Corollary}
\theoremstyle{definition}
\newtheorem{definition}[theorem]{Definition}
\theoremstyle{remark}
\newtheorem{remark}[theorem]{Remark}
\numberwithin{equation}{section}
\newcommand{\eps}{{\varepsilon}}
\newcommand{\de}{\partial}
\newcommand{\ph}{\varphi}
\newcommand\R{{\mathbb R}}
\newcommand\Q{{\mathbb Q}}
\newcommand\N{{\mathbb N}}
\newcommand\Pe{{\rm Per}\,}
\newcommand\sdif{{\vartriangle}}
\newcommand\dist{{\rm {dist}}}
\newcommand\dd{{\rm {d}}}
\renewcommand\div{{\rm {div}}}
\newcommand\M{\mathbf{M}}
\newcommand{\cO}{{\mathcal{O}}}
\newcommand{\cF}{{\mathcal{F}}}
\newcommand{\cH}{{\mathcal{H}}}
\newcommand{\cG}{{\mathcal{G}}}
\newcommand{\cE}{{\mathcal{E}}}
\def\blfootnote{\xdef\@thefnmark{}\@footnotetext}
\title{Mean-convex sets and minimal barriers}
\author[E.~Spadaro]{Emanuele Spadaro}
\address{Max-Planck-Institut f\"ur Mathematik in den Naturwissenschaften, Leipzig}
\email{spadaro@mis.mpg.de}
\begin{document}

\begin{abstract}
A mean-convex set can be regarded as a barrier for the construction of minimal surfaces. Namely, if $\Omega \subset \R^3$ is mean-convex and $\Gamma \subset \de\Omega$ is a null-homotopic (in $\Omega$) Jordan curve, then there exists an embedded minimal disk $\Sigma \subset \bar\Omega$ with boundary $\Gamma$. Does a mean-convex set $\Omega$ contain all minimal disks with boundary on $\de\Omega$? Does it contain the solutions of Plateau's problem?
We answer this question negatively and characterize the least barrier enclosing all the minimal hypersurfaces with boundary on a given set.
\end{abstract}

\maketitle

\section{Introduction}
\blfootnote{The author is grateful to the Mathematisches Forschungsinstitut Oberwolfach, for the present work originated from the Arbeitsgemeinschaft on Minimal Surfaces held in the Fall 2009.}
A mean-convex set $\Omega$ in a Riemannian manifold is a local \textit{barrier} for minimal hypersurfaces, for it satisfies a well-known strong maximum principle:
if a cycle $\Gamma\subset\Omega$ can be parametrized as a graph, then the minimal hypersurface $\Sigma$ with boundary $\Gamma$ is contained in $\Omega$, and $\Sigma \cap \de\Omega \neq \emptyset$ if and only if $\Sigma \subset \de \Omega$.
More interesting, mean-convex sets can also be regarded as barriers for the construction of minimal surfaces. Indeed, following the work by Meeks and Yau \cite{MeYa}, given a mean-convex set $\Omega$ in a homogeneous $3$-dimensional Riemannian manifold $N$ and given a Jordan curve $\Gamma\subset\de \Omega$ which is null-homotopic in $\Omega$, there exists an embedded minimal disk $\Sigma$ such that $\de\Sigma=\Gamma$ and $\Sigma\subset\bar\Omega$ (see also \cite{AlSi} for convex sets $\Omega$).

However, a mean-convex set $\Omega$ may fail to be a \textit{global barrier}. There are simple examples for this phenomenon due to topological obstructions (for instance, the case of a boundary Jordan curve $\Gamma$ which is not null-homotopic in $\Omega$). Nevertheless, as we show in \S~\ref{s.mc barrier}, this global barrier principle may also fail in the simplest case of $\Omega \subset \R^3$ homeomorphic to a ball and $\Gamma \subset \Omega$ a Jordan curve, as well as if we restrict to area minimizing disks.

This arises the question: which is the least global barrier for all minimal hypersurfaces with boundary in $\Omega$?
A set $\Theta\subset\R^n$ is called a \textit{global barrier} if:
\[
 \Sigma\;\text{minimal hypersurface, }\; \de\Sigma \subset \Theta \quad \Longrightarrow \quad \Sigma \subset \Theta.
\]

In this paper we address the issue of characterizing the minimal barrier containing a set $\Omega$, here called the \textit{mean-convex hull} of $\Omega$:
\begin{equation}\label{e.def mc}
 \Omega^{mc} := \bigcap_{\Omega\subset\Theta \in \mathcal{A}} \Theta,
\end{equation}
where $\mathcal{A}$ denotes the family of global barriers in $\R^n$.
Few remarks are in order.
\begin{enumerate}
 \item Clearly, the closed convex hull $\Omega^{co}$ is a global barrier containing $\Omega$, hence the intersection in \eqref{e.def mc} is non-trivial. Nevertheless, $\Omega^{co}$ may not be the smallest one (see the examples in \S~\ref{s.mc barrier}).
 \item According to what said before, unless for the usual hulls, a mean-convex set does not need to coincide with its mean-convex hull.
 \item Nevertheless, by the definition $\Omega^{mc}$ turns out to be a hull, i.e.~$(\Omega^{mc})^{mc}=\Omega^{mc}$.
\end{enumerate}

Similar notions of mean-convex hull have been introduced for minimal hypersurfaces spanning a fixed extreme boundary, see \cite{White, Coskunuzer}.
The above defined mean-convex hull $\Omega^{mc}$ has in principle no topological structure and enjoys no regularity. The main result of the paper is to prove that, in dimension $n\leq 7$, the mean-convex set $\Omega^{mc}$ has actually a regular (optimal $C^{1,1}$) boundary and is in fact a mean-convex set.

\begin{theorem}\label{t.main}
Let $\Omega \subset \R^n$, with $n\leq 7$, be a bounded closed set with $\de\Omega \in C^{1,1}$. Then, $\Omega^{mc}$ is a closed mean-convex set with $C^{1,1}$ regular boundary. Moreover, $\de \Omega^{mc} \setminus \Omega$ is a minimal hypersurface with boundary in $\Omega$.
\end{theorem}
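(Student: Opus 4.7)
The plan is to identify $\Omega^{mc}$ with the solution $E^*$ of the one-sided obstacle problem
\[
 E^* \in \arg\min\{\Pe(E) : \Omega \subset E \subset B_R\},
\]
for any large ball $B_R \supset \Omega$. All the claims of the theorem will then follow by analyzing $E^*$ and proving the two inclusions $\Omega^{mc} \subset E^*$ and $E^* \subset \Omega^{mc}$.

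For the regularity of $E^*$: existence is standard via BV compactness and lower semicontinuity of the perimeter. On the free part $\partial E^* \setminus \Omega$, the set $E^*$ is a local perimeter minimizer without obstacle, hence a smooth minimal hypersurface for $n \le 7$ by the De~Giorgi--Federer theorem. The $C^{1,1}$ regularity of the whole $\partial E^*$ and the tangential matching of the free part along $\partial \Omega$ come from the classical obstacle regularity for perimeter functionals (Tamanini, Caffarelli), and is sharp with respect to the $C^{1,1}$ regularity of the obstacle. Mean-convexity of $E^*$ then follows from the first variation: on the free part $H\equiv 0$, while at contact points on $\partial E^* \cap \partial \Omega$ outward normal perturbations preserve the constraint $\Omega \subset E$, forcing the mean curvature vector of $\partial E^*$ to point into $E^*$.

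Next, $E^*$ is a global barrier, so $\Omega^{mc} \subset E^*$: this is the strong maximum principle for minimal hypersurfaces against mean-convex $C^{1,1}$ barriers, which can be proved via comparison with a super-harmonic function on $\R^n \setminus \operatorname{int} E^*$ vanishing on $\partial E^*$ (whose existence uses the mean-convexity of $\partial E^*$) applied to its restriction to any competing minimal hypersurface.

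For the reverse inclusion $E^* \subset \Omega^{mc}$, fix any closed barrier $\Theta \supset \Omega$. Applying the barrier property to the minimal hypersurface $\partial E^* \setminus \Omega$ (with boundary in $\partial \Omega \subset \Theta$) yields $\partial E^* \subset \Theta$. For interior points $x \in E^* \setminus (\Omega \cup \partial E^*)$, the plan is to produce a minimal hypersurface with boundary in $\Omega$ passing arbitrarily close to $x$; this surface will lie in $\Theta$ and give $x \in \overline{\Theta} = \Theta$. Such approximating minimal hypersurfaces would be obtained by sweeping the interior of $E^* \setminus \Omega$ with free boundaries of perturbed obstacle problems (for instance by dragging a small piece of $\Omega$ toward $x$ and passing to the limit as the perturbation shrinks). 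I expect this last step to be the main technical obstacle: one has to ensure that the perturbed minimizers have non-trivial free boundaries passing through $x$, that these are non-degenerate minimal hypersurfaces (where the absence of interior singularities for $n \le 7$ plays its role), and that in the limit the boundary of the sweeping surface lies in $\partial \Omega$. Equivalently, one must show that minimal hypersurfaces with boundary in $\Omega$ densely sweep the interior of $E^* \setminus \Omega$, which is the analytic core of the theorem.
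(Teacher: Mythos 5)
Your proposal rests on identifying $\Omega^{mc}$ with the minimizing hull $E^*$ of $\Omega$ and on the claim that $E^*$ is a global barrier. That claim is false, and this is precisely the phenomenon the paper is built around. The strong maximum principle for minimal hypersurfaces against a mean-convex $C^{1,1}$ boundary gives only a \emph{local} barrier property: if a connected minimal $\Sigma$ touches $\de E^*$ from inside at an interior point and lies on one side near it, then $\Sigma$ agrees with $\de E^*$ there. It does \emph{not} prevent a minimal hypersurface with $\de\Sigma \subset E^*$ from leaving $E^*$; the standard way to promote a local barrier to a global one is to foliate $\R^n\setminus E^*$ by mean-convex leaves (as the paper does with rescaled catenoids in the example where $\Omega=\Omega^{mc}$), and mean-convexity of $\de E^*$ alone supplies no such foliation. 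Your superharmonic-comparison argument controls a one-sided touching but says nothing about a surface that detaches, exits $E^*$, and returns. Section~\ref{s.mc barrier} gives concrete counterexamples: the dumb-bell set and the domain $\Omega_{\theta_0}$ are mean-convex (and can be arranged to be minimizing hulls, so $E^*=\Omega$), yet they admit minimal disks with boundary in $\Omega$ that bulge outside $\Omega$, so $E^*=\Omega\subsetneq\Omega^{mc}$. Indeed the Introduction states explicitly that the minimizing hulls of Huisken--Ilmanen ``do not lead in general to a global barrier.''

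Because a single variational obstacle problem cannot detect the global barrier property, the paper instead runs a monotone weak mean curvature flow with obstacle $\Omega$, started from a large convex set and flowing inward, and identifies $\Omega^{mc}$ with its asymptotic limit $\cE(\Omega)$. The barrier property of $\cE(\Omega)$ is obtained quantitatively from the one-step distance bound $\dist(\de E^{(h)}_{i+1},\de E^{(h)}_i)\leq\gamma\sqrt{h}$ of Proposition~\ref{p.time unif} combined with a translation-and-strong-maximum-principle argument at each discrete time step in Proposition~\ref{p.max}; the $C^{1,1}$ regularity and minimality of the free boundary come from uniform curvature bounds on the approximate asymptotics (Proposition~\ref{p.uniform} and Corollary~\ref{c.reg asympt}). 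Your ``sweeping'' idea for $E^*\subset\Omega^{mc}$ is in the right spirit and close to the paper's Step~5, where $\cE(\Omega)$ is covered by flat slices, but the other inclusion is where the plan fails, and no amount of work on the sweeping step can repair it.
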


In particular, if $n=3$ and $\Omega$ is connected, by the property (3) above and Theorem~\ref{t.main} it follows that the least barrier for minimal hypersurfaces with boundary in $\Omega$ is actually a homology ball enclosing the set $\Omega$ whose boundary either touches $\Omega$ or is a minimal surface.

\subsection{Heuristics of the proof}
Given the global nature of $\Omega^{mc}$ (in particular the lack of a priori information concerning its topology and regularity), a purely partial differential equation approach does not seem to be tailored to distinguish between the local and global barrier property. Similarly, the solutions to several variational problems which can be naturally associated to the mean-convex hull, such as, e.g., the minimizing hulls considered by Ilmanen and Huisken \cite{HuIl}, do not lead in general to a global barrier.

The main idea of the paper is to use an evolution approach. Roughly speaking, minimal surfaces with boundary in $\Omega$ can be seen as stationary solutions of the mean curvature flow with fixed boundary in $\Omega$, interpreted as an obstacle to the flow. Hence, remaining within this intuition, one could try to characterize the mean-convex hull in terms of the asymptotic evolution of a mean curvature flow with obstacle. To this purpose, we consider the evolution of the boundaries of sets $F_t$ containing $\Omega$ such that the normal velocity $\vec v_{F_t}$ at any point of $\de F_t$ satisfies the equation:
\begin{equation}\label{e.eq flow}
\vec v_{F_t}(x)=
\begin{cases}
\vec H_{\de F_t}(x) & \text{if } x\in \de F_t\setminus\Omega,\\
{\max}\left\{\vec H_{\de F_t}\cdot \vec n_{F_t},0\right\}\,\vec n_{F_t} & \text{if } x\in \de F_t\cap\Omega,
\end{cases}
\end{equation}
where $\vec n_{F_t}$ denotes the unit external normal to $\de F_t$.
In words, the evolution of $F_t$ follows the classical mean curvature flow equation away from the obstacle $\Omega$ while on the boundary of $\Omega$ satisfies a unilateral constraint, namely it can leave the obstacle if its mean curvature vector points outward, otherwise it stops.\footnote{After the paper was completed, we learned that a similar evolution has been considered in a recent preprint by Almeida, Chambolle and Novaga \cite{AlChNo}.} The idea is to show that
\[
\Omega^{mc}=\lim_{t \to +\infty} F_t.
\]

Clearly, such heuristic approach cannot naively work in a general framework. As it is well known, the mean curvature flow develops singularities in finite time, thus not allowing a pointwise meaning to \eqref{e.eq flow} -- thought possible under specific geometric assumptions. Hence, in order to define such a flow we need to consider a generalized flow with obstacle. There are by now many approaches to weak mean curvature flow: Brakke's varifolds flow \cite{Br}, the partial differential equations approach by Evans and Spruck \cite{EvSp1,EvSp2,EvSp3,EvSp4} and Chen, Giga and Goto \cite{CGG}, the elliptic regularization by Ilmanen \cite{Il3}, and the barrier approach developed by Ilmanen \cite{Il1,Il2}, De Giorgi \cite{DG1,DG2}, Bellettini and Novaga \cite{BeNo}, White \cite{Wh1}. In this paper we generalize to the case of obstacle the mean curvature flow of Caccioppoli sets introduced by Almgren, Taylor and Wang \cite{ATW} (see also Luckhaus and Sturzenhecker \cite{LuSt}).
However, since in some special cases the different approaches turn out to be closely related, we do believe that similar arguments to ours may also be apply within different choices for the weak flow.

\subsection{Overview}
The proof of Theorem~\ref{t.main} is made in different steps and diverts from the heuristic sketch given above because of several technical issues, mainly due to the lack of regularity for the weak flow. In particular, the restriction on the space dimension $n\leq 7$ in Theorem~\ref{t.main} is due to the use of curvature estimates for stable minimal surfaces needed in the proof of the regularity of the mean-convex hull. However, similar (though weaker) partial regularity results can be obtained in higher dimension.

The paper is organized as follows. In \S~\ref{s.mc barrier} we give the main definitions, fix the notation and illustrate some counterexamples to the equivalence between mean-convexity and the notion of barrier.
Then, after recalling the basic notion of geometric measure theory (which, although essential for our arguments, we keep to the minimum), we develop in \S~\ref{s.flow} a weak theory of mean curvature flow with obstacle after \cite{ATW,LuSt} (since not needed for the main result, the proof of the existence of a limiting weak flow is postponed to the Appendix~\ref{a.estimates}). In \S~\ref{s.mc flow} we specialize our arguments to the case of monotone flows starting from a minimizing hull. 
In particular, we will show that in this case one can define uniquely a maximal solution to the flow, which has an asymptotic limit with uniform curvature bounds. Finally, in \S~\ref{s.mc hull} we prove the main results in Theorem~\ref{t.main}.

\subsection{Other ambient manifolds} All the results of the paper hold unchanged if one replaces $\R^n$ with an arbitrary Riemannian manifold.
The proofs are, indeed, simple modifications of the ones given in $\R^n$.
Note, however, that the implication of Theorem~\ref{t.main} on the topology of the mean-convex hull in dimension $n=3$ may fail to be true.

\section{Mean-convex sets and barriers}\label{s.mc barrier}
Throughout the next sections, $\Omega$ denotes a bounded closed set in $\R^{n}$ with $C^{1,1}$ boundary $\de \Omega$. We let $\nu$ be the external unit normal to $\de\Omega$ and $\vec H_{\de\Omega}$ the mean curvature vector of $\de\Omega$.

One says that $\Omega$ is mean-convex if $\vec H_{\de\Omega}$ is pointing inside $\Omega$ at every point, i.e.~$\vec H_{\de\Omega}\cdot \nu\leq0$. As pointed out in the Introduction, mean-convex sets are local barrier to minimal hypersurfaces because of the strong Hopf maximum principle (see, e.g., \cite[Section~1.7]{CoMi}). Moreover, following the work by Meeks and Yau \cite{MeYa}, a mean-convex set can be used as a global barrier for the construction of minimal surfaces (their result holds in fairly more general hypotheses on the ambient manifold and on the regularity of the mean-convex set, which may be assumed piecewise $C^{1,1}$). Here the term disk refers to a smooth $2$-dimensional surface with boundary, having the topology of the planar disk $D=\{(x,y) \in \R^2 : x^2+y^2 \leq 1\}$.

\begin{theorem}[Meeks \& Yau \cite{MeYa}]\label{t.meeks--yau}
Let $\Omega\subseteq\R^3$ be a bounded mean-convex set and $\Gamma\subseteq\de\Omega$ a closed curve, null-homotopic in $\Omega$ (i.e.~there exists a disk contained in $\Omega$ with boundary $\Gamma$).
Then, there exists an embedded minimal disk $\Sigma\subseteq\Omega$ such that $\de\Sigma=\Gamma$ and $\Sigma$ minimizes the area among all the disks in $\Omega$ with the same boundary.
\end{theorem}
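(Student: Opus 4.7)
The plan is to solve a constrained Plateau problem for disks contained in $\Omega$ with boundary $\Gamma$, to show via the mean-convex maximum principle that the interior of a minimizer avoids $\de\Omega$, and finally to upgrade the solution to an embedding via a cut-and-paste/Dehn's lemma argument.

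For existence, consider the class $\mathcal{A}$ of maps $u\in W^{1,2}(D;\R^3)$ with $u(D)\subset\Omega$ whose boundary trace is a weakly monotone parametrization of $\Gamma$, normalized by a three-point condition. Since $\Gamma$ is null-homotopic in $\Omega$, the class $\mathcal{A}$ is nonempty; since $\Omega$ is closed and bounded, the Dirichlet energy is coercive and lower semicontinuous on $\mathcal{A}$, and $\mathcal{A}$ is weakly closed (Courant--Lebesgue plus the three-point normalization rules out boundary concentration). A minimizer $u$ thus exists, and is conformal and harmonic off the coincidence set $u^{-1}(\de\Omega)$ since interior variations with image in $\mathrm{int}\,\Omega$ remain admissible; hence $\Sigma:=u(\mathrm{int}\,D)$ is a branched minimal immersion away from $\de\Omega$, with area equal to the Dirichlet energy of $u$.

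If $u(z_0)\in\de\Omega$ for some interior $z_0\in D$, then locally near $u(z_0)$ both $\Sigma$ and $\de\Omega$ are $C^{1,1}$ graphs over their common tangent plane, with $\Sigma$ lying on the $\Omega$-side of $\de\Omega$. The inequality $\vec H_{\de\Omega}\cdot\nu\le 0$ and the minimality of $\Sigma$ give an ordering of the corresponding scalar mean curvatures to which Hopf's strong maximum principle applies, forcing $\Sigma\equiv\de\Omega$ in a neighborhood of $u(z_0)$; unique continuation of the harmonic map $u$ then yields $u(D)\subset\de\Omega$, whence pushing $u(D)$ slightly along $-\nu$ produces an area-decreasing competitor by mean-convexity, contradicting minimality. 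Hence $u(\mathrm{int}\,D)\subset\mathrm{int}\,\Omega$, and the classical Osserman--Gulliver argument excludes interior branch points for area-minimizing disks, so $\Sigma$ is a smooth immersed minimal disk with boundary $\Gamma$ that minimizes area in $\mathcal{A}$.

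For embeddedness, one first replaces $\Omega$ by strictly mean-convex approximations (via a small conformal perturbation of the metric near $\de\Omega$) to strengthen the above maximum principle, solves the corresponding problem, and passes to the limit using uniform area and curvature estimates. For each approximate problem one applies the Meeks--Yau cut-and-paste surgery: were $\Sigma$ to have a transverse self-intersection along a curve $\gamma$, cutting along $\gamma$, rejoining with the opposite pairing of sheets, and rounding the resulting corner, would produce a map of strictly smaller area; from its connected components one extracts, via the Loop Theorem/Dehn's lemma of Papakyriakopoulos applied in a tower of covers of $\Omega\setminus\Sigma$, an embedded disk still spanning $\Gamma$, contradicting minimality. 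This surgery is the principal obstacle: unlike in the convex case, a mean-convex $\Omega$ can carry nontrivial topology, so preserving both the disk type and the boundary $\Gamma$ under the cut-and-paste requires the full strength of $3$-manifold topology, and it is precisely here that the null-homotopy hypothesis on $\Gamma$ enters essentially.
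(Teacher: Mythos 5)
The paper does not prove this statement; Theorem~\ref{t.meeks--yau} is cited from Meeks--Yau \cite{MeYa} and used as a black box, so there is no internal argument to compare against. Your sketch is a reasonable outline of the Meeks--Yau strategy (constrained Douglas--Rado minimization, a strong-maximum-principle barrier estimate, and the cut-and-paste/tower construction for embeddedness), but two of the middle steps are stated more strongly than they hold. First, the appeal to unique continuation is off: on the coincidence set $u^{-1}(\de\Omega)$ the minimizer $u$ solves a variational inequality rather than the harmonic map equation, so the identity theorem for harmonic maps cannot be invoked there directly. The clean route --- and the one Meeks--Yau actually follow --- is to perturb the metric in a collar of $\de\Omega$ so that $\de\Omega$ becomes \emph{strictly} mean-convex, solve the constrained problem in the perturbed metric (where the strong maximum principle forbids any interior touching outright, with no coincidence-set subtlety), and pass to the limit; this is exactly the approximation you postpone to the embeddedness step, but it is needed already here. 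Second, even granting $u(D)\subset\de\Omega$, the claim that pushing along $-\nu$ strictly decreases area fails if $\de\Omega$ is itself a minimal surface (a perfectly admissible mean-convex boundary): there is then no area-decreasing competitor and hence no contradiction. That case is, however, harmless --- $\Sigma\subset\de\Omega\subset\Omega$ already satisfies the conclusion, and being a subdomain of a $C^{1,1}$ surface it has no branch points --- so the right move is to accept it, not to argue it away. With the approximation moved earlier and the degenerate case acknowledged, the overall plan, including the Meeks--Yau surgery and the use of the null-homotopy hypothesis to keep the competitor a disk spanning $\Gamma$, is sound.
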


More precisely, the theorem asserts that every solution $\Sigma$ (actually, it may not be unique) of the \textit{constrained} Plateau problem, namely minimizing the area among all disks contained in $\Omega$ with the same boundary, satisfies the minimal surface equation $\vec H_\Sigma\equiv 0$, i.e.~is a stationary point for the \textit{unconstrained} area functional. In particular, Theorem~\ref{t.meeks--yau} does not imply that the Douglas--Rado solution of the Plateau problem with boundary $\Gamma$ is contained in $\Omega$.

There are several examples of mean-convex sets $\Omega$ and cycles $S \subset \de\Omega$ such that the solution of the Plateau problem is not contained in $\Omega$. The simplest ones are due to topological obstructions. For instance, this is the case when the curve $\Gamma$ in Theorem~\ref{t.meeks--yau} is not null-homotopic in $\Omega$. For example, if $\Omega$ is a rotationally symmetric mean-convex torus and $\Gamma$ any parallel circle on its boundary, the unique minimizing surface with this boundary is the flat disk, which is not contained inside the torus. (Note that it follows from this considerations that the mean-convex hull of the torus coincides with its convex hull, thus showing that $C^{1,1}$ is the optimal regularity.)

Similarly, there are simple examples in the case of not connected boundaries. Consider, for instance, two parallel circles in the boundary of a dumb-bell mean-convex set $\Omega$: choosing appropriately the ratio between the radius of the circles and the distance between them, it can be proved that the minimizing surface is actually the catenoid which partially bends outside $\Omega$ (see Figure~\ref{f.cat} for a self-explanatory picture: the details are left to the reader).

\begin{figure}[htp]\label{f.cat}
\centering
\includegraphics[height=2.5cm]{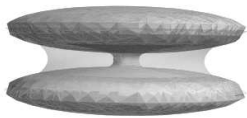}
\caption{Catenoid partially bending outside a mean-convex set.}
\end{figure}

\subsection{Counterexample in the hypothesis of Meeks \& Yau's theorem}
However, it is also possible to find a counterexample under the hypothesis of Theorem~\ref{t.meeks--yau}, namely when $\Gamma$ is a null-homotopic simple Jordan curve. Moreover, in order to avoid any topological obstruction, we will also allow $\Omega$ to be a mean-convex set homeomorphic to the $3$-dimensional ball.

Since the existence of such example was not known to the author, we give in this section a fairly detailed description using some results is Geometric Measure Theory.

\begin{figure}[htp]
\centering
\includegraphics[height=2.5cm]{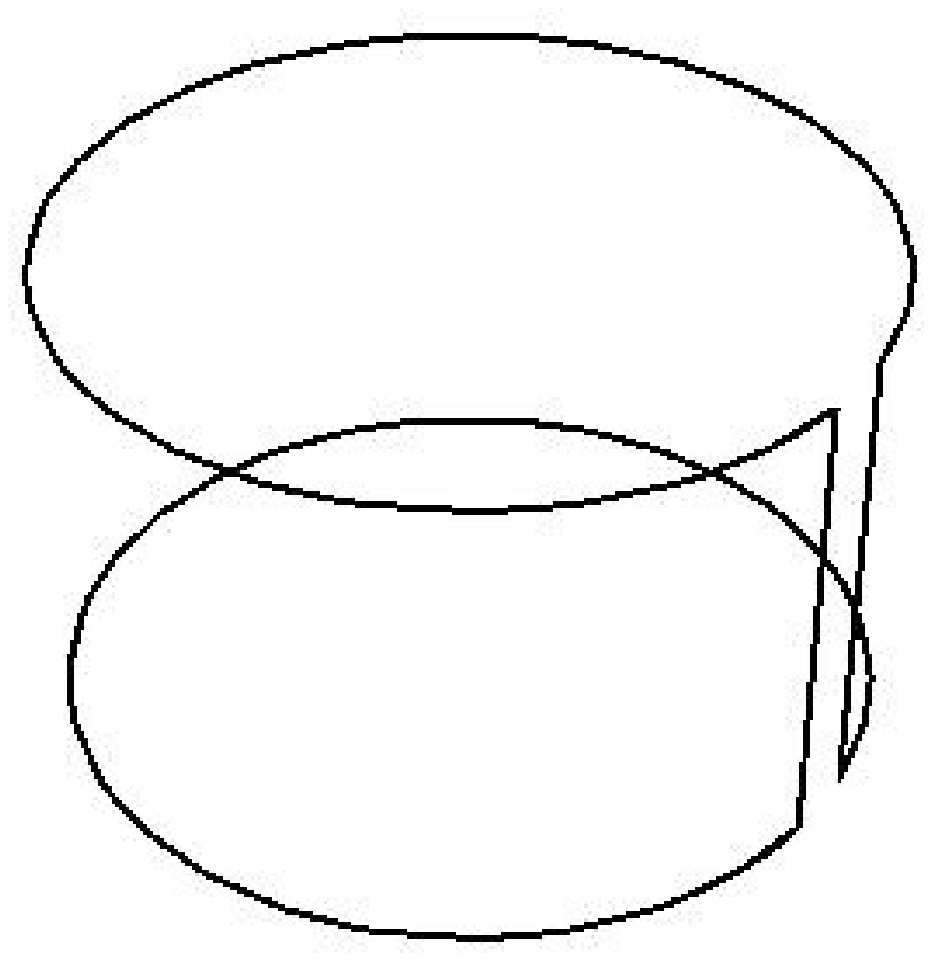}
\hspace{1cm}
\includegraphics[height=2.5cm]{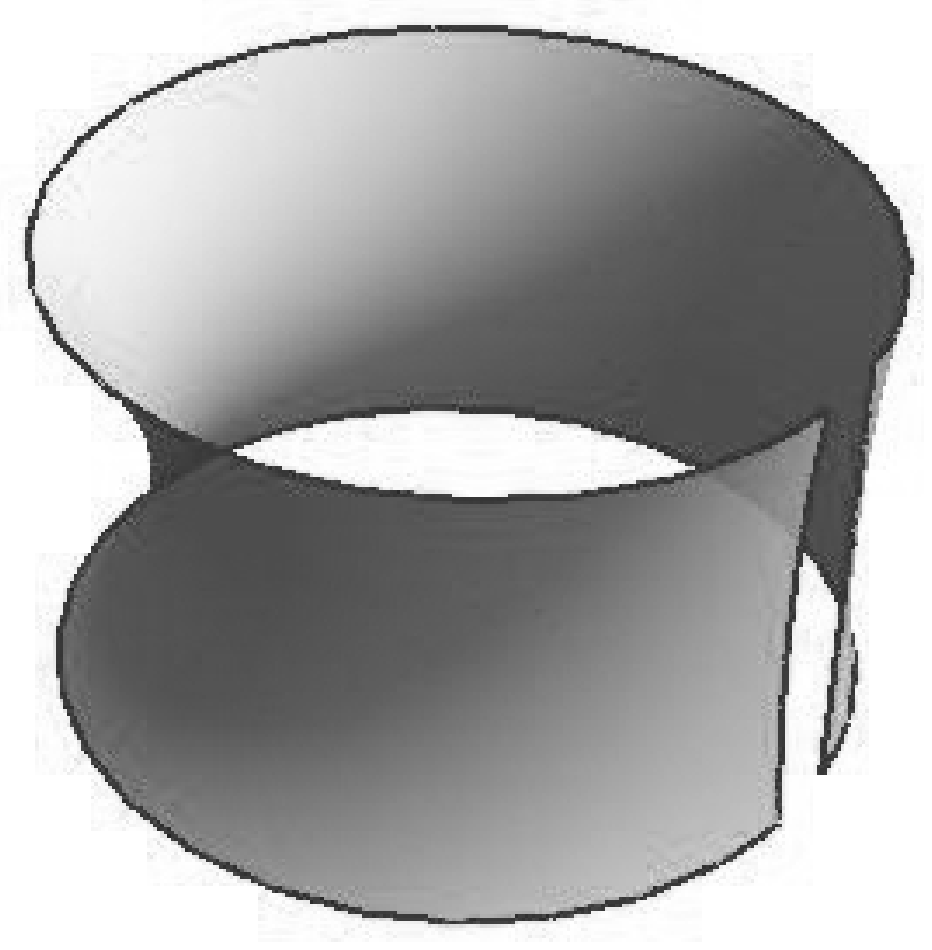}
\hspace{1cm}
\includegraphics[height=2.5cm]{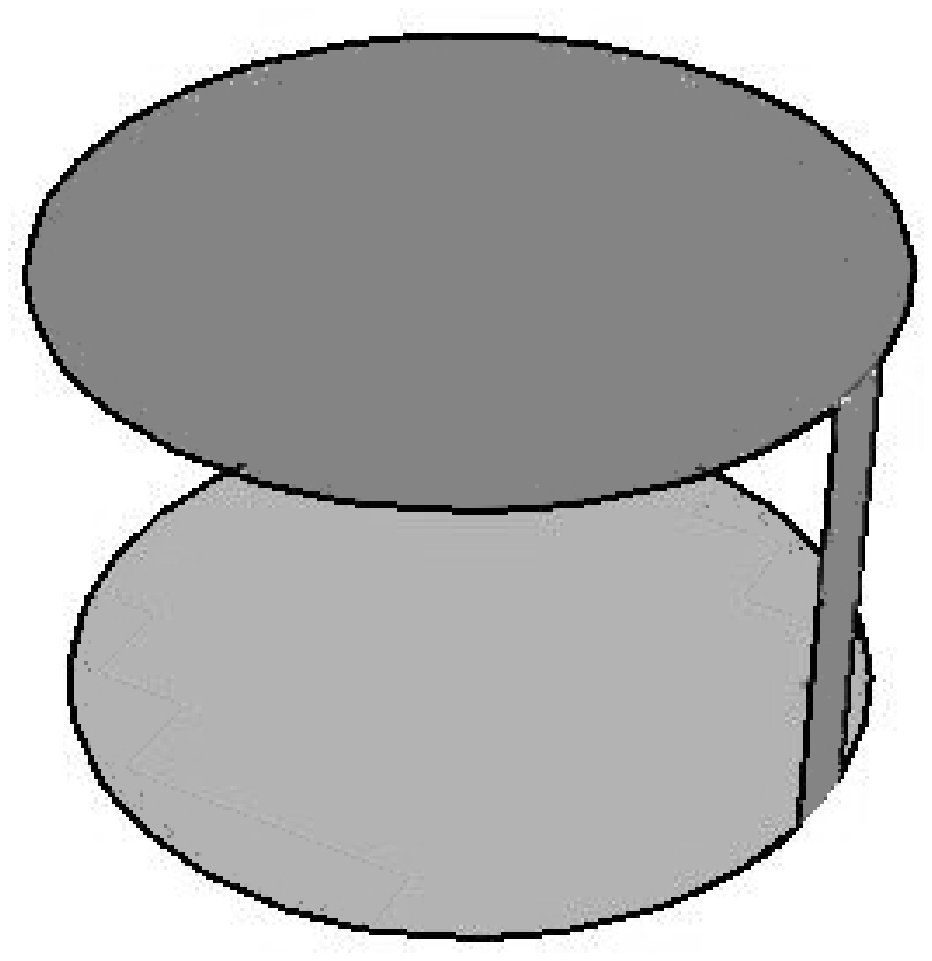}
\caption{Curve bounding at least two minimal disks (approximate solutions drawn).}
\label{f.Gamma}
\end{figure}

Our starting point is the well-known example of a Jordan curve bounding at least two different minimal disks (see, for example, \cite[\S~389]{Ni}).
Let us fix cylindrical coordinates in $\R^3$:
\[
 (x,y,z)=(\rho \cos\theta,  \rho \sin\theta, z) \quad \text{with } \; (\theta,\rho,z)\in[0,2\pi)\times[0,+\infty)\times\R.
\]
For $\theta_0>0$ a parameter to be fixed momentarily, let $\Omega_{\theta_0}$ be the following closed set (see Figure~\ref{f.Omega} for two views of this domain):
\[
\Omega_{\theta_0}:=\Big\{(\theta,\rho,z)\,:\theta_0\leq\theta\leq2\,\pi,\;|z|\leq L,\;a\cosh (z/a)\leq\rho\leq1\Big\},
\]
where $L>h:=0.6$ and $0<a<1$ are fixed in such a way that $a\cosh (L/a)<1$.
Note that such a choice of parameters is possible, for example $L=0.62$ and $a=0.5$.
Let $\Gamma\subseteq\de\Omega$ be the curve given by (see Figure~\ref{f.Gamma} left):
\[
\Gamma_{\theta_0}:=\Big\{(\theta,1,z)\,:(\theta,z)\in\de\big([2\,\theta_0,2\,\pi]\times[-h,h]\big)\Big\}.
\]

\begin{figure}[htp]
\centering
\includegraphics[height=3cm]{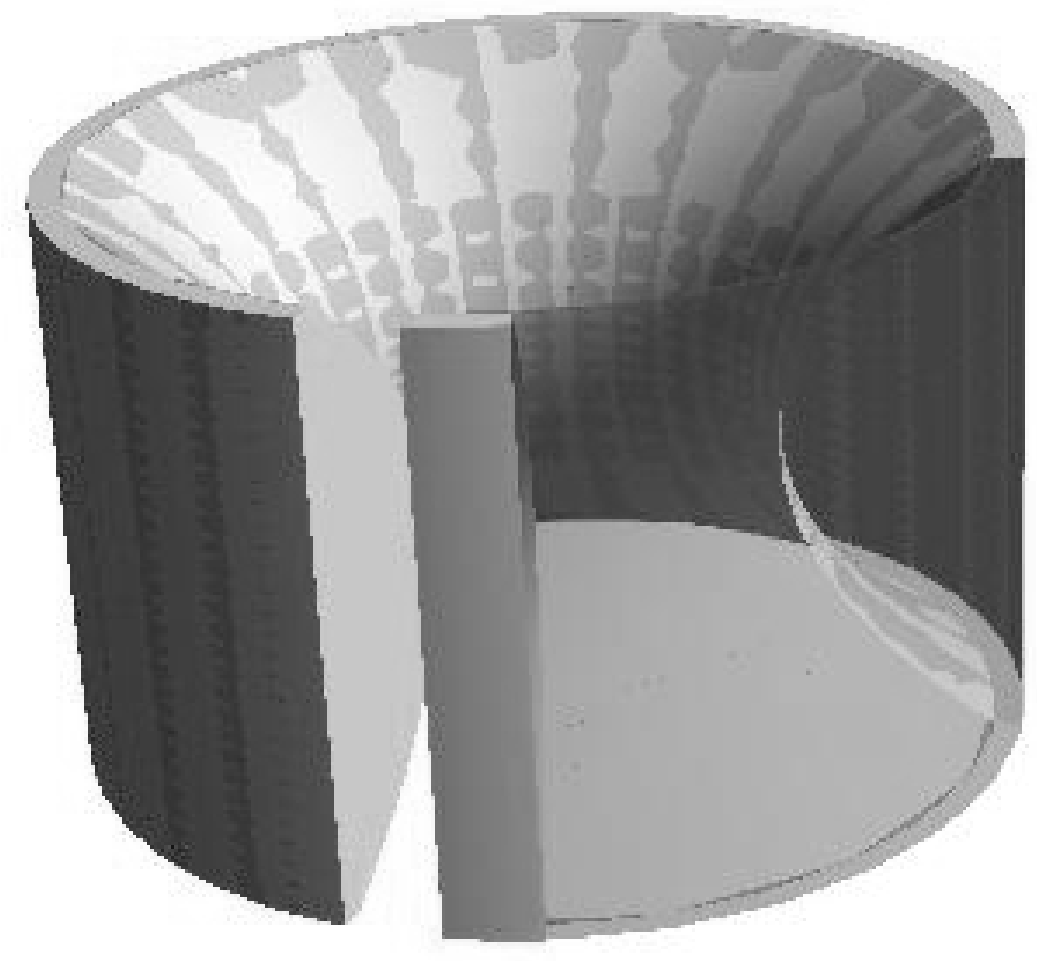}
\hspace{1cm}
\includegraphics[height=3cm]{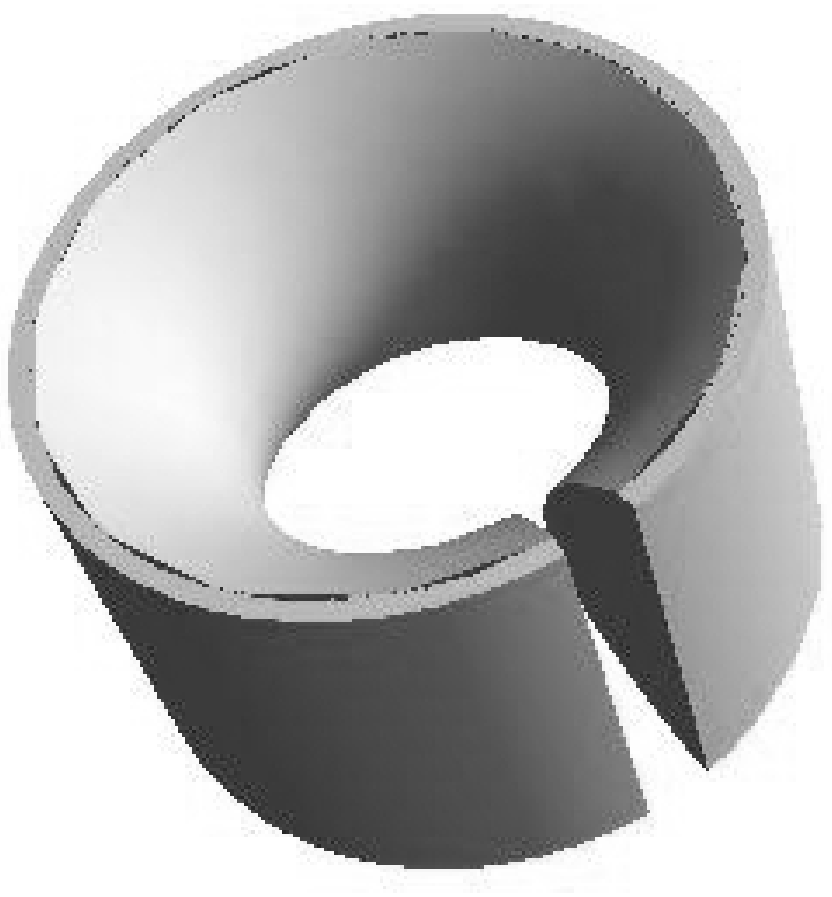}
\caption{Two views of the mean-convex domain $\Omega_{\theta_0}$ (on the left part the transparency shows the interior section).}
\label{f.Omega}
\end{figure}

It is well-known that the area minimizing surface with boundary two axial unitary circles on parallel planes distant $2\,h$ is the union of the two disks 
\begin{gather}
D_+ := \big\{(\theta, \rho, h)\; : \; \theta \in [0,2\pi), \; \rho \in [0,1] \big\},\\
D_{-} := \big\{(\theta, \rho, -h)\; : \; \theta \in [0,2\pi), \; \rho \in [0,1] \big\}
\end{gather}
(see, for example, \cite[\S~389]{Ni} and \cite{Sh}).

By compactness of integral currents, the minimizers $\Sigma_{\theta_0}$ of the area with boundary $\Gamma_{\theta_0}$ converge as $\theta_0\to0$ to a current $\Sigma$ with boundary the two circles $\de D_+$, $\de D_{-}$, and
\begin{equation}\label{e.lsc}
 \M(\Sigma) \leq \liminf_{\theta_0 \to 0} \M(\Sigma_\theta),
\end{equation}
(here $\M$ stands for the \textit{mass} of a current, that is the analog of the volume measure in Geometric Measure Theory).
It is a consequence of the Bridge Principle for minimal surfaces \cite[Theorem 2.2]{White94} that $\Sigma = D_+ \cup D_{-}$.
Indeed, if this is not the case, then being the two disks the absolute minimizers,
\begin{equation}\label{e.disks}
 \M (D_+ \cup D_{-}) < \M(\Sigma) \stackrel{\eqref{e.lsc}}{\leq} \liminf_{\theta_0 \to 0} \M(\Sigma_\theta).
\end{equation}
By the Bridge Principle, for every $\eps>0$ there exists $\theta_\eps>0$ and an integer rectifiable current $T_{\eps}$ such that $\de T_{\eps} = \Gamma_{\theta_\eps}$ and
\[
 \M(T_\eps) \leq \M (D_+ \cup D_{-}) + \eps,
\]
which together with \eqref{e.disks} contrasts the minimizing property of $\Sigma_{\theta_\eps}$ for $\eps$ sufficiently small.

By a simple consequence of the regularity theory for minimal surfaces, this convergence is smooth away from the points 
\[
(\theta,\rho,z) = (0,1,\pm\,h),
\]
and $\Sigma_{\theta_0}$ is contained in a neighborhood of
\[
D_+\cup D_{-} \cup \big\{(0,1,z) : |z|\leq h \big\},
\]
for $\theta_0$ sufficiently small.
In particular, for $\theta_0$ small enough, the minimizing disk with boundary $\Gamma_{\theta_0}$ resembles the surface in Figure~\ref{f.Gamma} on the right, and therefore is not contained in $\Omega_{\theta_0}$. Both $\Omega_{\theta_0}$ and $\Gamma_{\theta_0}$ are not smooth, but piecewise smooth. Nevertheless, since all the angles between the faces of $\Omega_{\theta_0}$ are less than $\pi$, it is not difficult (though boring) to modify the above example and reduce to a smooth mean-convex domain and a smooth Jordan curve.

\subsection{Mean-convex hull $\neq$ convex hull}
It follows directly from the definition that in the plane the mean-convex hull coincides with the convex hull.
Nevertheless, a simple example shows that the two notions do not need to coincide in dimension $n\geq 3$. Consider the set contained between a vertical catenoid and two horizontal parallel planes, i.e.
\[
\Omega=\Big\{(x,y,z)\::\:|z|\leq1,\;x^2+y^2\leq \cosh(z)^2\Big\}\subset\R^3,
\]
(the fact that $\Omega$ is not smooth is not essential, for the example can be modified accordingly). Clearly,
\[
\Omega^{co} = \{x^2+y^2\leq\cosh(1)^2\}. 
\]

Nevertheless, it is not difficult to show that $\Omega=\Omega^{mc}$. To see this, let $\Sigma$ be a minimal hypersurface with $\de\Sigma \subset \Omega$.
By the convex hull property, every minimal surface with boundary in $\Omega$ is contained in $\Omega^{co}$, hence, in particular, $\Sigma\subseteq\{|z|\leq 1\}$. On the other hand, consider the foliation by rescaled catenoids:
\[
\{|z|\leq 1\}\setminus \Omega=\bigcup_{\lambda\geq1}\textup{Cat}_\lambda,
\]
where
\[
\textup{Cat}_{\lambda}:=\Big\{(x,y,z)\::\:|z|\leq1,\;x^2+y^2=\lambda^2\, \cosh(z/\lambda)^2\Big\}.
\]

\begin{figure}[ht]
\centering
\includegraphics[height=2.0cm]{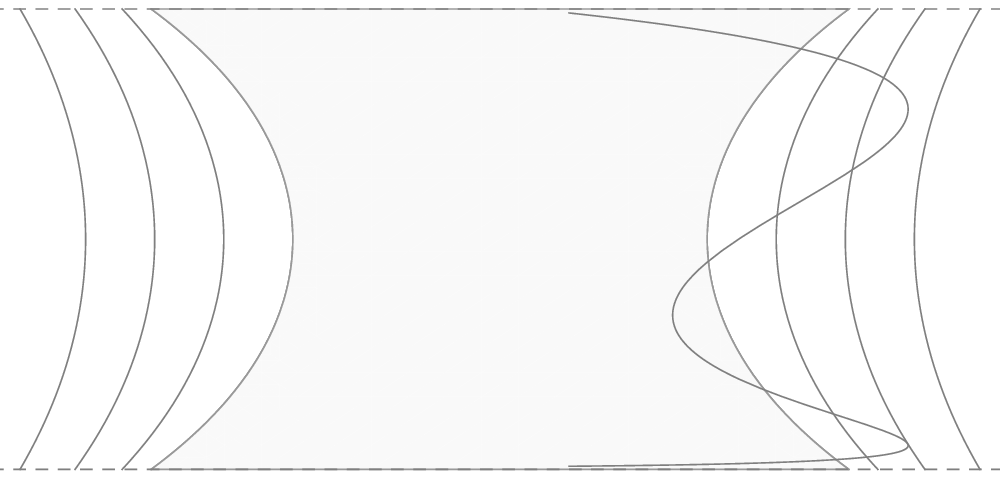}
\caption{Catenoids' foliation.}
\label{f.fol}
\end{figure}

Let $\lambda_{{\max}}$ the maximum $\lambda$ such that $\Sigma\cap \textup{Cat}_\lambda\neq\emptyset$ and assume $\lambda_{{\max}}>1$, i.e.~$\Sigma$ is not contained in $\Omega$.
By the strong maximum principle, it follows that $\Sigma\equiv\textup{Cat}_{\lambda_{{\max}}}$, thus contradicting $\de\Sigma\subset\Omega$ and implying that $\Sigma\subset\Omega$, i.e.~$\Omega=\Omega^{mc}$.

\section{Mean curvature flow with obstacle}\label{s.flow}
In this section we develop a weak mean curvature flow of Caccioppoli sets with obstacle.
We follow closely the approach of Almgren, Taylor and Wang \cite{ATW}, as revisited by Luckhaus and Sturzenhecker \cite{LuSt}. This is done in two steps, first introducing a discrete in time approximation of the flow; then, passing into the limit in the time step.

We start recalling the few notions of Geometric Measure Theory which are needed in the sequel (more details on Caccioppoli sets can be found in the monograph \cite{Gi}).

\subsection{Caccioppoli sets}\label{ss.caccioppoli}
A measurable set $E\subset\R^n$ is said to be a Caccioppoli set or a set of finite perimeter if there exist sets $E_j\subset\R^n$ with smooth boundary $\de E_j\in C^1$ such that $\chi_{E_j}\to\chi_E$ in $L^1(\R^n)$ and
\[
\liminf_{j\to+\infty}\cH^{n-1}(\de E_j)<+\infty.
\]
Here, as usual, $\cH^{n-1}$ denotes the $(n-1)$-dimensional Hausdorff measure and $\chi_E$ the characteristic function of the set $E$, namely
\[
\chi_E(x)=
\begin{cases}
1 &\text{if }\;x\in E,\\
0 &\text{if }\;x\notin E.
\end{cases}
\]
Note that, according to the above definition, a Caccioppoli set is defined up to a set of Lebesgue measure zero, for $\chi_E\in L^1$ identifies an equivalent class of measurable functions. Nevertheless, we will always assume to have fixed a pointwise representative of $E$ which satisfies the following condition:
\[
x\in\de E\quad\Longleftrightarrow\quad0<|B_r(x)\cap E|<\omega_n\,r^n \quad\forall\;r>0,
\]
where $|A|$ denotes the Lebesgue measure of a measurable set $A \subseteq \R^n$.

The measure of the boundary of $E$ in a open set $\cO\subset\R^n$, also called the perimeter of $E$ in $\cO$, is then given by the minimum limit of the measure in $\cO$ of the boundaries of the approximating sets, i.e.
\[
\Pe(E,\cO):=\inf\left\{\liminf_{j\to+\infty}\cH^{n-1}(\de E_j\cap\cO)\,:\,\chi_{E_j}\to\chi_E\;\;\text{in }\;L^1(\cO),\;\de E_j\in C^1\right\}.
\]

We will often write $\Pe(E)$ for $\Pe(E,\R^n)$.
Moreover, it turns out that, in case $\de E\in C^1$, then $\Pe(E,\cO)=\cH^{n-1}(\de E\cap\cO)$, thus justifying the term ``perimeter''.
An easy consequence of the definition (by choosing appropriate transversal approximations -- see also \cite[Proposition~3.38]{AFP}) is the inequality:
\begin{equation}\label{e.union int}
\Pe(E\cup F,\cO)+\Pe(E\cap F,\cO)\leq \Pe(E,\cO)+\Pe(F,\cO).
\end{equation}

Finally, we will use often the following two properties of Caccioppoli sets.
\begin{enumerate}
\item \textit{Lower semicontinuity}:
\[
\Pe(E,\cO)\leq \liminf_{j\to+\infty}\Pe(E_j,\cO),\quad\forall\;\chi_{E_j}\to\chi_E\;\;\text{in }\;L^1(\cO).
\]
\item \textit{Compactness}: given $E_j\subseteq B_R\subset\R^n$ with $\sup_j\Pe(E_j)<+\infty$, there exists $E\subset\R^n$ and a subsequence $(E_{j_k})_{k\in\N}$ such that
\[
\chi_{E_{j_k}}\to\chi_E\quad\text{in}\quad L^1(\R^n),\quad \text{as }\; k\to+\infty.
\]
\end{enumerate}

\subsection{Discrete in time approximate flow with obstacle}\label{ss.discrete}
In what follows $\Omega \subset \R^n$ is a closed bounded set with $C^{1,1}$ boundary and $E_0\subseteq\R^n$ is the initial bounded closed set of the evolution such that
\[
|E_0|=0 \quad \text{and} \quad \Omega\subset E_0.  
\]
We define the approximate flow of time step $h>0$ in the following way (for the heuristics motivating this definition we refer to the arguments for the unconstrained flow in \cite{ATW}).
We set $E^{(h)}_0:=E_0$ and, given $E^{(h)}_i$ for some $i\in\N$, we let $E^{(h)}_{i+1}$ be a minimizer of the functional $\cF(\cdot, h, E^{(h)}_{i})$ given by
\begin{equation*}
\cF(E,h, E^{(h)}_{i}) := \Pe(E)+\int_{E\sdif E^{(h)}_{i}}\frac{\dist\big(x,\de E^{(h)}_{i}\big)}{h}\;\dd x,
\end{equation*}
where the minimum is taken among all the sets $E$ containing $\Omega$ a.e.,
\[
\cF(E^{(h)}_{i+i},h, E^{(h)}_{i}) = \min \left\{\cF_{E^{(h)}_{i}}(E)\,:\,E\supset\Omega\;\;\text{a.e.}\right\}.
\]

It is clear that, thanks to the compactness and the semicontinuity properties (1) and (2) \S~\ref{ss.caccioppoli}, this minimum problem is well-posed in the class of sets of finite perimeter and admits minimizers -- note that the $L^1$ convergence implies the convergence almost everywhere for subsequences, thus preserving the constraint $E\supset\Omega$ in the limit.
Notice, however, that uniqueness is in general false as show by the examples in \cite[\S~8.2]{ATW}. The approximate flow is, hence, defined as:
\[
E^{(h)}_t:=E^{(h)}_{\lfloor t\rfloor}\quad\forall\;t\geq0,
\]
where $\lfloor t\rfloor\in\N$ is the integer part of $t$, namely $\lfloor t \rfloor \leq t <\lfloor t\rfloor+1$.

\subsection{Regularity of approximate flows}
It follows from the regularity theory in geometric measure theory that the sets $E^{(h)}_{t}$ have $C^{1,1}$ boundaries. To see this, first we note that the functional $\cF(\cdot,h,E^{(h)}_{i})$ can be written in the following way:
\begin{align}\label{e.F2}
\cF(E,h,E^{(h)}_{i}) & =
\Pe(E)+\int_{\R^n}u_{i,h}(x)\;\chi_E(x)\;\dd x+\int_{\R^n}u_{i,h}(x)\;\chi_{E^{(h)}_{i}}(x)\;\dd x,
\end{align}
where we set $u_{i,h}:=h^{-1}d_i$ and $d_{i}$ the signed distance from \smash{$\de E^{(h)}_i$}:
\begin{equation}\label{e.di}
d_{i}(x):=
\begin{cases}
\dist\big(x,\de E^{(h)}_{i}\big) & \text{if }\;x\notin E^{(h)}_i,\\
-\dist\big(x,\de E^{(h)}_{i}\big) & \text{if }\;x\in E^{(h)}_i.
\end{cases}
\end{equation}
The last term in \eqref{e.F2} is a constant not depending on $E$. Therefore, it turns out that $E^{(h)}_{i+1}$ is also a minimizer of the functional $\cG(\cdot, h, E^{(h)}_{i})$:
\begin{equation}\label{e.G}
\cG(E, h, E^{(h)}_{i}) := \Pe(E)+\int_{\R^n}u_{i,h}(x)\,\chi_E(x)\,\dd x.
\end{equation}
Note that
\[
\cG\Big(E \cap (E^{(h)}_{i})^{co},h,E^{(h)}_i\Big) \leq \cG\big(E ,h,E^{(h)}_i\big), 
\]
with equality only if $E\subseteq (E^{(h)}_{i-1})^{co}$. Hence, it follows by a simple induction argument that
\begin{equation}\label{e.bounded}
E^{(h)}_{t}\subset (E_0)^{co}\quad \forall\;t\geq 0.
\end{equation}

In turns, this implies that the sets $E^{(h)}_{i}$ are uniform $\Lambda$-minimizers of the perimeter for $\Lambda=\sigma\,h^{-1}$, where $\sigma>0$ is a given constant independent of $h$.
Namely, there exists $R>0$ such that for all $i\in\N$, $x\in\R^n$ and $0<r<R$, it holds
\begin{equation}\label{e.L min prima}
\Pe(E^{(h)}_i,B_r(x))\leq \Pe(F,B_r(x))+\sigma\,h^{-1}\,r^n\quad\forall\;F\sdif E^{(h)}_i\Subset B_r(x).
\end{equation}
From the regularity theory of $\Lambda$-minimizers (see \cite{Alm}, \cite{Bom}), if $n\leq 7$, it follows that $\de E^{(h)}_i\in C^{1,1/2}$ and the following density estimates hold (see \cite[Proposition~3.4]{Tam}):
\begin{gather}
\frac{\omega_{n-1}}{n}-\sigma\,h^{-1}\,r\leq \frac{\min\big\{|E^{(h)}_i\cap B_r(x)|,|E^{(h)}_i\setminus B_r(x)|\big\}}{r^n} \qquad \forall \; x \in \R^n,\label{e.density2}\\
\omega_{n-1} - (n-1)\,\sigma\,h^{-1}\,r \leq \frac{\Pe(E_i^{(h)},B_r(x))}{r^{n-1}} \qquad \forall \; x\in \de E_{i}^{(h)}.
\label{e.density2bis}
\end{gather}
Moreover, since by the $C^{1,1/2}$ regularity we can always reduce to a classical nonparametric setting, from the regularity theory for the obstacle problem (see, for example, \cite{Ca, KS}) and $u_{i,h}$ Lipschitz, it follows that $\de E^{(h)}_i\in C^{1,1}$ -- more details are given in Appendix~\ref{a.almost min}.

\subsection{Uniform distance estimate}
The main analytical estimate exploited in the proof of Theorem~\ref{t.main} is the following on the distance between two successive boundaries of the approximate flow.

\begin{proposition}\label{p.time unif}
There exists a dimensional constant $\gamma(n)>0$, such that
\begin{equation}\label{e.unif dist}
\dist\big(\de E^{(h)}_{i+1}, \de E^{(h)}_{i}\big)\leq \gamma(n) \; \sqrt{h}
\quad \forall \ i\in\N, \ \forall \ h>0.
\end{equation}
\end{proposition}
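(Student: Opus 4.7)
The bound $\sqrt h$ corresponds to the classical mean curvature flow of a sphere: a ball of radius $R$ shrinks to a ball of radius $\sqrt{R^2-2(n-1)h}$ in time $h$. My strategy is to use spheres as explicit comparison barriers for the discrete flow of Caccioppoli sets.

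\emph{Step 1 (ball-barrier principle).} For every $y\in\R^n$ and $R>\sqrt{2(n-1)h}$, writing $R':=\sqrt{R^2-2(n-1)h}$, I would first prove
\begin{align*}
\textnormal{(a)}\quad & B_R(y)\cap E^{(h)}_i = \emptyset \ \Longrightarrow\ B_{R'}(y)\cap E^{(h)}_{i+1} = \emptyset, \\
\textnormal{(b)}\quad & B_R(y)\subset E^{(h)}_i \ \Longrightarrow\ B_{R'}(y)\subset E^{(h)}_{i+1}.
\end{align*}
Both are proved by testing the minimality of $E^{(h)}_{i+1}$ against the explicit spherical competitors $\widetilde E_a := E^{(h)}_{i+1}\setminus B_{R'}(y)$ (case (a)) and $\widetilde E_b := E^{(h)}_{i+1}\cup B_{R'}(y)$ (case (b)). Admissibility is automatic in (b); in (a) it follows from $\Omega\subset E^{(h)}_i$ together with $B_R(y)\cap E^{(h)}_i=\emptyset$, which forces $\Omega\cap B_{R'}(y)=\emptyset$. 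The resulting energy difference splits into a perimeter change, bounded by $\Pe(B_{R'})=n\omega_n R'^{\,n-1}$ via the submodularity \eqref{e.union int}, and a distance-penalty change, controlled from below by $(R-R')/h$ times the measure of the ``bubble'' $E^{(h)}_{i+1}\cap B_{R'}(y)$ (case (a)) or ``hole'' $B_{R'}(y)\setminus E^{(h)}_{i+1}$ (case (b)), using $\dist(\cdot,\de E^{(h)}_i)\geq R-R'$ on $B_{R'}(y)$. The balance between these two contributions singles out precisely the relation $R^2-R'^{\,2}=2(n-1)h$.

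\emph{Step 2 (distance estimate).} Let $x_0\in\de E^{(h)}_{i+1}$ and $d:=\dist(x_0,\de E^{(h)}_i)$. Since $\de E^{(h)}_i$ is closed, exactly one of the mutually exclusive cases $B_d(x_0)\subset E^{(h)}_i$ or $B_d(x_0)\cap E^{(h)}_i=\emptyset$ holds. Applying the appropriate part of Step~1 with $y=x_0$, $R=d$, if $d^2>2(n-1)h$ then $x_0$ would be placed in the interior of $E^{(h)}_{i+1}$ or of its complement, contradicting $x_0\in\de E^{(h)}_{i+1}$. Hence $d\leq\sqrt{2(n-1)h}=:\gamma(n)\sqrt h$. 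The matching reverse Hausdorff inequality $\dist(y_0,\de E^{(h)}_{i+1})\leq\gamma(n)\sqrt h$ for $y_0\in\de E^{(h)}_i$ follows from the inclusion sandwich provided by (a) and (b), which squeezes $E^{(h)}_{i+1}$ between the inner and outer $\sqrt{2(n-1)h}$-parallel sets of $E^{(h)}_i$.

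\emph{Main obstacle.} The nontrivial part is Step~1: upgrading the competitor argument from a small-measure bound on the bubble/hole to genuine emptiness. A one-shot competitor only yields an estimate of the form $|B_{R'}(y)\setminus E^{(h)}_{i+1}|\lesssim h\,R'^{\,n-2}$, which is nonzero in general. Closing the gap requires either a continuous family of radial competitors $\{B_r(y):0<r<R\}$, leading to a differential inequality whose integral reconstructs exactly the MCF ODE $\dot R=-(n-1)/R$, or else the relative isoperimetric inequality inside $B_r(y)$ combined with the density bounds \eqref{e.density2}--\eqref{e.density2bis} for the $\Lambda$-minimizer $E^{(h)}_{i+1}$ in the small-scale regime where $\Lambda r\ll 1$. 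This ODE-matching between the discrete minimizer and the continuous mean curvature flow is the heart of the estimate.
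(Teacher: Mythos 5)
There is a genuine gap, which you honestly flag: Step~1 (the spherical barrier comparison (a)/(b)) is never proved, and one of the two repairs you sketch cannot work. Invoking the $\Lambda$-minimizer density estimates \eqref{e.density2}--\eqref{e.density2bis} ``in the small-scale regime where $\Lambda r\ll 1$'' is unavailable here: $\Lambda=\sigma h^{-1}$ while the relevant scale is $r\sim\sqrt h$, so $\Lambda r\sim\sigma h^{-1/2}\to+\infty$ as $h\to 0$, and at scale $\sqrt h$ those estimates are vacuous. Your other repair (a continuous family of radial competitors yielding the MCF ODE) is the standard route to the full ball-barrier comparison in the ATW/Chambolle literature and could be made to work, but you do not carry it out, and it proves far more than the proposition needs. (Also, your constant is only the infinitesimal one: the exact ATW step for a sphere satisfies $R'(R-R')=(n-1)h$, not $R^2-R'^{\,2}=2(n-1)h$; this does not affect the $\sqrt h$ scaling, but it means the ``emptiness'' radius you chose is slightly off.)

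The paper's proof bypasses the ball-barrier principle entirely by making an observation your plan misses. If $x\in\de E^{(h)}_{i+1}$ with $\dist(x,\de E^{(h)}_i)>2r$ and, say, $B_r(x)\cap E^{(h)}_i=\emptyset$, then $u_{i,h}=h^{-1}d_i>0$ on all of $B_r(x)$, so $E^{(h)}_{i+1}$ is an \emph{exact} one-sided perimeter minimizer in $B_r(x)$ (removing mass cannot decrease $\cG$): the obstacle-free, $\Lambda$-free density estimate of Lemma~\ref{l.one side}, obtained by the De Giorgi isoperimetric iteration, then gives $|E^{(h)}_{i+1}\cap B_r(x)|\geq\theta\,r^n$ with no error term. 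This is exactly the ``relative isoperimetric'' input you gesture at, but applied to a genuine one-sided minimizer rather than to a $\Lambda$-minimizer, which is what kills the error. After that, the ``one-shot'' competitor $E^{(h)}_{i+1}\setminus B_r(x)$ that you correctly set up, together with $u_{i,h}\geq r/h$ on $B_r(x)$, gives the opposite bound $(r/h)\,|E^{(h)}_{i+1}\cap B_r(x)|\leq n\omega_n r^{n-1}$; taking $r=\gamma\sqrt h/2$ and comparing the two inequalities forces $\gamma\leq 2\sqrt{n\omega_n/\theta}$, a contradiction for $\gamma$ chosen larger. No differential inequality, no barrier principle, and no emptiness upgrade are required — the measure bound you dismissed as insufficient is in fact enough once it is played against a $\Lambda$-free lower density bound.
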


The proof of Proposition~\ref{p.time unif} follows by a simple adaptation of the arguments in \cite{LuSt}. For readers' convenience, we give here a detailed proof.

We premise the following density estimate for one-sided minimizers of the perimeter. The estimate can be easily deduce from the original arguments by De Giorgi exploited for minimizers \cite{DG} (see also \cite{Gi}).

\begin{lemma}\label{l.one side}
There exists a dimensional constant $\theta=\theta(n)>0$ with this property.
Let $E\subset B_R\subset \R^n$ be a Caccioppoli set such that $0\in \de E$ and 
\begin{equation}\label{e.onesided}
\Pe(E,B_R) \leq \Pe (F,B_R) \quad \forall \; E\subseteq F, \; F\setminus E \Subset B_R.
\end{equation}
Then,
\begin{equation}\label{e.density3}
\theta\,r^n\leq |B_r\setminus E| \quad \forall \; 0<r<R.
\end{equation}
\end{lemma}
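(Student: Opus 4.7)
The plan is to adapt De Giorgi's classical density estimate for perimeter minimizers to the one-sided setting, exploiting the fact that the minimality condition \eqref{e.onesided} is rich enough to admit the ``filling'' competitors $F_r := E \cup B_r$ for $r < R$. Since only enlargements are allowed, the argument can only produce a lower bound on the complement $V(r) := |B_r \setminus E|$, which is exactly what the statement predicts. The idea is to derive a differential inequality for $V$ from the comparison with $F_r$, and then integrate.

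For a.e.\ $r \in (0,R)$ one has $\cH^{n-1}(\de^* E \cap \de B_r) = 0$, so that the usual slicing identities give
\[
\Pe(E, B_R) = \Pe(E, B_r) + \Pe(E, B_R \setminus \bar B_r),
\]
\[
\Pe(F_r, B_R) = \cH^{n-1}(\de B_r \setminus E) + \Pe(E, B_R \setminus \bar B_r),
\]
where the second identity uses that inside $B_r$ the set $F_r$ coincides with $B_r$ and outside $\bar B_r$ with $E$. Plugging $F_r$ into \eqref{e.onesided} yields $\Pe(E, B_r) \leq \cH^{n-1}(\de B_r \setminus E)$. Applying now the Euclidean isoperimetric inequality to the Caccioppoli set $B_r \setminus E$, whose reduced boundary sits inside $(\de^* E \cap B_r) \cup (\de B_r \setminus E)$, we obtain
\[
V(r)^{(n-1)/n} \leq c(n)\, \Pe(B_r \setminus E) \leq c(n) \bigl(\Pe(E, B_r) + \cH^{n-1}(\de B_r \setminus E)\bigr) \leq 2\,c(n)\,\cH^{n-1}(\de B_r \setminus E).
\]
By the coarea formula, $V'(r) = \cH^{n-1}(\de B_r \setminus E)$ for a.e.\ $r$, which produces the ODE $V'(r) \geq c_1(n)\, V(r)^{(n-1)/n}$.

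Finally, rewriting this inequality as $\tfrac{d}{dr}V(r)^{1/n} \geq c_1(n)/n$, and using that $0 \in \de E$ in the chosen pointwise representative (so that $V(r) > 0$ for every $r > 0$, while clearly $V(r) \to 0$ as $r \to 0^+$), integration from $0$ to $r$ yields $V(r)^{1/n} \geq (c_1(n)/n)\, r$, i.e.\ $V(r) \geq \theta(n)\, r^n$ with $\theta(n) := (c_1(n)/n)^n$, which is the claim.

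The main obstacle is to justify cleanly the boundary decomposition in the comparison step: one must avoid the null set of radii where $\cH^{n-1}(\de^* E \cap \de B_r) > 0$, and carefully treat the trace of $E$ on the sphere $\de B_r$ in order to establish the identity for $\Pe(F_r, B_R)$. Once this is done, extending the pointwise lower bound on $V$ from a full-measure set of radii to all $r \in (0,R)$ is immediate from the monotonicity of $r \mapsto V(r)$. These are standard consequences of the fine theory of sets of finite perimeter.
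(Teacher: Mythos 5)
Your proof is correct and follows essentially the same De Giorgi-type density argument as the paper: the same competitor $F_r = E \cup B_r$, the same reduction to $\Pe(E,B_r) \leq \cH^{n-1}(\de B_r\setminus E)$, the isoperimetric inequality applied to $B_r\setminus E$, the coarea identification of $V'(r)$, and integration of the resulting ODE. You make explicit the (implicit in the paper) role of $0\in\de E$ in guaranteeing $V(r)>0$ for all $r>0$, which is indeed what allows the integration of $V'\geq c_1 V^{(n-1)/n}$ to yield a nontrivial lower bound rather than collapse on the trivial solution $V\equiv 0$.
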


\begin{proof}
For $r<R$, set $F_r:=E\cup B_r$. Note that, for almost every $r>0$, it holds
\begin{gather*}
\Pe(F_r)=\cH^{n-1}(\de B_r\setminus E)+\Pe(E,\R^n\setminus B_r(x)),\\
\Pe(B_r\setminus E)=\cH^{n-1}(\de B_r\setminus E)+\Pe(E,B_r),\\
\Pe(E)=\Pe(E,B_r)+\Pe(E,\R^n\setminus B_r).
\end{gather*}
Indeed, if $E$ were smooth, these formulas follow for all the $r$ such that $B_r$ and $E$ have transversal intersections. Otherwise one can argue by approximation.

Using now \eqref{e.onesided}, we deduce that, for almost every $r>0$,
\begin{align}\label{e.half}
\Pe(F_r)&=\cH^{n-1}(\de B_r\setminus E)+\Pe(E,\R^n\setminus B_r(x))\notag\\
&\geq \Pe(E)\notag\\
&=\Pe(E,B_r)+\Pe(E,\R^n \setminus B_r)\notag\\
&=\Pe(B_r\setminus E)-\cH^{n-1}(\de B_r\setminus E)+\Pe(E,\R^n\setminus B_r(x)).
\end{align}
By the isoperimetric inequality \cite[Corollary~1.29]{Gi}, there exists a dimensional constant $C>0$, such that
\begin{equation}\label{e.dif ineq}
C\,|B_r\setminus E|^{\frac{n-1}{n}}\leq \Pe(B_r\setminus E) \stackrel{\eqref{e.half}}{\leq}  2\,\cH^{n-1}(\de B_r\setminus E).
\end{equation}

Setting $f(r):=|B_r\setminus E|$, by the coarea formula \cite[3.4.4]{EG}, it holds
\[
\cH^{n-1}(\de B_r\setminus E)=f'(r) \quad \text{for a.e. } r>0.
\]
Hence, \eqref{e.dif ineq} reads as
\[
f(r)^{\frac{n-1}{n}}\leq 2\,C^{-1}\, f'(r).
\]
Integrating \eqref{e.dif ineq} we get the desired \eqref{e.density3} for a dimensional constant $\theta>0$.
\end{proof}

Using Lemma~\ref{l.one side}, we can give a proof of the uniform bound in Proposition~\eqref{p.time unif}.

\begin{proof}[Proof of Proposition~\ref{p.time unif}]
We claim that \eqref{e.unif dist} holds for
\begin{equation}\label{e.gamma}
\gamma:=2\sqrt{\frac{n\,\omega_n}{\theta}}+1,
\end{equation}
where $\theta$ is the constant in \eqref{e.density3}.

Set for simplicity of notation $L_1:=E^{(h)}_{i+1}$ and $L_0:=E^{(h)}_{i}$ and assume by contradiction that there exists a point $x\in \de L_1\setminus L_0$ such that
\[
\dist(x,L_0) > \gamma\,\sqrt{h}.
\]
Let $r:=\gamma\,\sqrt{h}/2$ and note that, since $B_r(x)\cap L_0=\emptyset$, $L_1$ satisfies a one-sided minimizing property in $B_r(x)$.
Indeed, let $F$ be such that
\[
F\subset L_1 \quad \text{and} \quad |L_1\setminus F|\subset\subset B_r(x).
\]
From $\cG(L_1,h,L_0)\leq \cG(F,h,L_0)$ and $u_{i,h}\vert_{B_r(x)}>0$ (notation as in \eqref{e.F2}), it follows that
\begin{align*}
\Pe(L_1,B_r(x))
\leq \Pe(F, B_r(x)).
\end{align*}
This implies that we can apply Lemma~\ref{l.one side} to $B_r(x)\setminus L_1$ and, hence, the density estimate \eqref{e.density3} gives:
\begin{equation}\label{e.dens1}
|L_1\cap B_{r}(x)|\geq \theta\,\left(\frac{\gamma\,\sqrt{h}}{2}\right)^{n}.
\end{equation}

On the other hand, set $L_3:=L_1\setminus B_{r}(x)$. By the minimizing property
\[
\cG(L_1,h,L_0)\leq \cG(L_3,h,L_0)
\]
and $u_{i,h}\vert_{B_r(x)} \geq \gamma/(2\sqrt{h})$, we get easily the following reversed bound:
\begin{equation}\label{e.dens2}
\frac{\gamma\,|L_1\cap B_{r}(x)|}{2\,\sqrt{h}}\leq n\,\omega_n\,\left(\frac{\gamma\,\sqrt{h}}{2}\right)^{n-1}.
\end{equation}
Clearly, \eqref{e.dens1} and \eqref{e.dens2} imply $\gamma\leq 2\sqrt{n\,\omega_n/\theta}$, which contradicts \eqref{e.gamma}.

Similarly, in the case there exists $x\in \de L_1 \cap L_0$ with $\dist(x,\de L_0)> \gamma \sqrt{h}$, we argue in the same way, noticing that $L_1$ turns out to be one-sided minimizing in a neighborhood of $x$.
\end{proof}

\subsection{Weak flow with obstacle}
Though it is not needed to the proof of Theorem~\ref{t.main}, we note that Proposition~\ref{p.time unif} also leads to the existence of a limit flow with obstacle. Indeed, from the very definition of discrete flow, it follows easily that
\[
\Pe(E^{(h)}_t)\leq \Pe(E_0)\quad\text{for every}\quad h,t\geq0.
\]
Hence, recalling \eqref{e.bounded} and the compactness (2) \S~\ref{ss.caccioppoli}, by a diagonal argument we find a subsequence $h$ (not relabelled) and sets $E_t$ such that
\[
E^{(h)}_t\to E_t\quad\text{as }\, h\to 0\quad \forall \;0\leq t\in\Q.
\]
Moreover, using Proposition~\ref{p.time unif}, one can show that for the whole discrete flow a uniform H\"older continuity in time in the $L^1$ topology holds (the proof is postponed to Appendix~\ref{a.estimates}).

\begin{proposition}\label{p.time cont}
There exists a constant $C>0$ such that
\begin{equation}\label{e.time cont}
|E^{(h)}_t\sdif E^{(h)}_s|\leq C\,|s-t|^{\frac{1}{2}}\quad\forall\;h>0,\;\forall\;
t,s\geq h>0.
\end{equation}
\end{proposition}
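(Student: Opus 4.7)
The plan is to combine the per-step dissipation built into the scheme with the uniform distance bound of Proposition~\ref{p.time unif}, in the spirit of Luckhaus--Sturzenhecker. The first ingredient is a per-step dissipation identity: since $\Omega\subseteq E^{(h)}_i$ at every step, the set $E^{(h)}_i$ is itself an admissible competitor in the minimum problem defining $E^{(h)}_{i+1}$, so testing minimality against it in $\cF(\cdot,h,E^{(h)}_i)$ yields
\[
\Pe\bigl(E^{(h)}_{i+1}\bigr)+\frac{1}{h}\int_{E^{(h)}_{i+1}\sdif E^{(h)}_i}\dist(x,\de E^{(h)}_i)\,\dd x\leq \Pe\bigl(E^{(h)}_i\bigr).
\]
Setting $D_i:=h^{-1}\int_{E^{(h)}_{i+1}\sdif E^{(h)}_i}\dist(x,\de E^{(h)}_i)\,\dd x$ and telescoping, one obtains the a priori bound $\sum_{i\geq 0}D_i\leq\Pe(E_0)$.

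The core of the argument is the single-step $L^1$ estimate
\[
V_i:=\big|E^{(h)}_{i+1}\sdif E^{(h)}_i\big|\leq C\sqrt{h\,D_i\,\Pe(E_0)}.
\]
By Proposition~\ref{p.time unif} the symmetric difference lies in the $\gamma\sqrt{h}$-tubular neighbourhood of $\de E^{(h)}_i$. For any $\eps\in(0,\gamma\sqrt{h}]$ I would split
\[
V_i\leq\bigl|\{x:\dist(x,\de E^{(h)}_i)\leq\eps\}\bigr|+\eps^{-1}hD_i,
\]
with the second term coming from Chebyshev's inequality. Combining with a tube estimate $|\{\dist(\cdot,\de E^{(h)}_i)\leq\eps\}|\leq C\eps\,\Pe(E_0)$, to be obtained by a Vitali covering of $\de E^{(h)}_i$ via the uniform density bounds \eqref{e.density2}--\eqref{e.density2bis}, and optimizing in $\eps=\sqrt{hD_i/\Pe(E_0)}$, yields the claim.

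The H\"older bound then follows by Cauchy--Schwarz. For $h\leq s<t$, set $N_1:=\lfloor s/h\rfloor$ and $N_2:=\lfloor t/h\rfloor$; by the triangle inequality,
\[
\bigl|E^{(h)}_t\sdif E^{(h)}_s\bigr|\leq\sum_{i=N_1}^{N_2-1}V_i\leq C\sqrt{\Pe(E_0)}\,\sqrt{N_2-N_1}\,\Big(\sum_{i=N_1}^{N_2-1}hD_i\Big)^{1/2}\leq C\,\Pe(E_0)\sqrt{(N_2-N_1)h},
\]
which gives \eqref{e.time cont} since $(N_2-N_1)h\leq|t-s|+h\leq 2|t-s|$ when $|t-s|\geq h$ (the regime $|t-s|<h$ amounts to at most one step and is covered by the bare bound $V_i\leq C\sqrt{h\,\Pe(E_0)}$ together with the constraint $s,t\geq h$).

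I expect the main obstacle to be the tube estimate at scale $\eps\sim\sqrt{h}$. The $\Lambda$-minimality of $E^{(h)}_i$ holds with $\Lambda=\sigma/h$, so \eqref{e.density2}--\eqref{e.density2bis} provide uniform Ahlfors regularity only at scales $r\lesssim h$, which is much smaller than the tube width $\sqrt{h}$. Reaching scale $\sqrt{h}$ requires a more delicate covering argument---e.g.\ iterating the density bound across dyadic scales, or using the one-sided comparison of Lemma~\ref{l.one side} applied to inner/outer offsets of $E^{(h)}_i$ to control the Minkowski content of $\de E^{(h)}_i$ at mesoscopic scales---while keeping the constant dependent only on $\Pe(E_0)$. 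Once this is established the summation is routine.
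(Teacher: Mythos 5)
Your overall strategy is essentially the paper's: split $E^{(h)}_{l+1}\sdif E^{(h)}_l$ into the part far from $\de E^{(h)}_l$ (controlled by Chebyshev against the telescoping dissipation) and the part inside a thin tube around $\de E^{(h)}_l$ (controlled by density bounds and a Besicovitch covering), then sum. The only organizational difference is that you optimize the threshold per step and then apply Cauchy--Schwarz, while the paper fixes a single threshold $\alpha\sim|t-s|^{-1/2}$ and telescopes; these are interchangeable.

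However, the obstacle you flag at the end is a genuine gap, and the workarounds you sketch do not resolve it. Dyadic iteration of \eqref{e.density2}--\eqref{e.density2bis} cannot extend the range of validity beyond $r\lesssim h$, because the error term $\Lambda r$ with $\Lambda=\sigma h^{-1}$ only worsens as $r$ grows; and the one-sided Lemma~\ref{l.one side} applied to offsets controls density from one side only, not the Minkowski content of the boundary. The missing idea in the paper is a sharpened almost-minimality constant: by Proposition~\ref{p.time unif}, every point $x$ within $\gamma\sqrt h$ of $\de E^{(h)}_l$ satisfies $|u_{l,h}(x)|=h^{-1}\dist(x,\de E^{(h)}_l)\le\gamma h^{-1/2}$, so in that tubular neighbourhood the sets $E^{(h)}_{l+1}$ (and $E^{(h)}_l$, using the distance bound to $\de E^{(h)}_{l-1}$) are $\Lambda$-minimizers with the \emph{improved} constant $\Lambda=C\gamma h^{-1/2}$ rather than $\sigma h^{-1}$. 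The density estimate then holds at all radii $r$ with $\Lambda r\ll1$, i.e.\ up to scales $\sim\sqrt h$, which is exactly the range you need for your tube estimate $|\{\dist(\cdot,\de E^{(h)}_i)\le\eps\}|\le C\eps\,\Pe(E_0)$ with $\eps\le\gamma\sqrt h$. Once this improved $\Lambda$-minimality is noted, your optimization in $\eps$ and the Cauchy--Schwarz summation close the argument exactly as you propose.
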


Clearly, this allows us to pass into the limit for every $t\geq0$ and find a limit flow $E_t$ satisfying the continuity estimate:
\[
|E_t\sdif E_s|\leq C\,|s-t|^{\frac{1}{2}} \quad \forall  \
t,s>0.
\]

\section{Monotone flows with obstacle}\label{s.mc flow}
Since we are interested in the asymptotics of the evolution with obstacle, we can restrict ourself to the case of ``nested'' flows, i.e.~flows satisfying $E_t\subseteq E_s$ for every $0\leq s\leq t$.
For the smooth flow the right condition to look at is the mean-convexity of the initial set. In the context of Caccioppoli sets there are different ways to generalize this notion, such as the local pseudo-convexity introduced by Miranda \cite{Mi} or the minimizing hulls (also called subsolutions) considered in \cite{BaGoMa, BaTa, HuIl}.
All these notions are variants of the one-sided minimization property introduced in the previous section. For our purposes, the minimizing hulls considered by Huisken and Ilmanen \cite{HuIl} fulfil.

\begin{definition}\label{d.mc}
A set $E\subseteq\R^n$ is a \textit{Minimizing Hull} in $\cO\subseteq\R^n$ open if
\begin{equation}\label{e.mc}
\Pe(E,\cO)\leq \Pe(F,\cO)\quad\forall\;E\subseteq F\quad\text{such that}\quad F\setminus E\Subset \cO.
\end{equation}
\end{definition}

We often do not specify the open set when $\cO=\R^n$. It is easy to verify that a minimizing hull $E$ with smooth boundary is mean-convex, while the reverse implication is in general false. Simple consequences of Definition~\ref{d.mc} are the following two properties.
\begin{itemize}
\item[(1)] \textit{If $E\subseteq\R^n$ is a minimizing hull and $F\subseteq\R^n$, then
\begin{equation}\label{e.mc3}
\Pe(E\cap F)\leq \Pe(F).
\end{equation}
}
Indeed, from the minimizing hull property $\Pe(E)\leq \Pe(E\cup F)$ and from  \eqref{e.union int}, we have
\begin{equation*}
\Pe(E\cap F)\leq \Pe(E)+\Pe(F)-\Pe(E\cup F) \leq \Pe(F).
\end{equation*}
\item[(2)] \textit{If $\{E_k\}_{k\in\N}$ is a sequence of minimizing hulls and $\chi_{E_k}\to\chi_E$ in $L^1$, then $E$ is a minimizing hull.}
Indeed, given $E\subset F$ such that $F \setminus E\Subset \R^n$, by the minimizing hull property of $E_k$ we have
\begin{equation}\label{e.mc h}
\Pe(E_k)\leq \Pe(E_k\cup F).
\end{equation}
On the other hand, $E_k\cap F\to E\cap F=E$ and, by semicontinuity (1) \S~\ref{ss.caccioppoli},
\begin{align*}
\Pe(E)&\leq \liminf_{k\to+\infty}\Pe(E_k\cap F)\\
&\stackrel{\mathclap{\eqref{e.union int}}}{\leq}\;
\liminf_{k\to+\infty}\big[\Pe(E_k)+\Pe(F)-\Pe(E_k\cup F)\big]\\
&\stackrel{\mathclap{\eqref{e.mc h}}}{\leq}\;\Pe(F).
\end{align*}
\end{itemize}

\subsection{Maximal solutions}
Given a minimizing hull as initial set, it is possible to define uniquely a maximal approximate flow.
The main observation in this regard is contained in the following lemma.

\begin{lemma}\label{l.mc}
Let $E_0\subset\R^n$ be a bounded closed minimizing hull such that
\[
\Omega\subseteq E_0 \quad \text{and} \quad |\de E_0|=0.
\]
Then, the following holds:
\begin{itemize}
 \item[(i)] any minimizer $E\supset \Omega$ of $\cG(\cdot, h, E_0)$ is a minimizing hull and $E\subseteq E_0$;
 \item[(ii)] if $E'$ is any other minimizer, then $E\cup E'$ and $E\cap E'$ are minimizers of $\cG(\cdot, h, E_0)$ as well. 
\end{itemize}
\end{lemma}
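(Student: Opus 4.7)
The plan is to exploit the sign of the potential $u_{0,h}$ relative to $E_0$: recall $u_{0,h}=h^{-1}d_0$ where $d_0$ is the signed distance to $\partial E_0$, hence $u_{0,h}\leq 0$ on $E_0$ and $u_{0,h}\geq 0$ on its complement, with $\{u_{0,h}=0\}=\partial E_0$ being negligible by hypothesis. Combined with the two minimizing-hull facts (1)--(2) already established after Definition~\ref{d.mc}, this will drive all the comparisons.

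For part (i), I would first show that any minimizer $E\supseteq \Omega$ satisfies $E\subseteq E_0$ by testing against the competitor $E\cap E_0$. This is admissible since $\Omega\subseteq E\cap E_0$. By the minimizing hull property of $E_0$ (inequality \eqref{e.mc3} applied with the roles of the two sets), $\Pe(E\cap E_0)\leq \Pe(E)$. For the potential term,
\[
\int u_{0,h}\,\chi_{E\cap E_0}-\int u_{0,h}\,\chi_E=-\int_{E\setminus E_0}u_{0,h}\,dx\leq 0,
\]
since $u_{0,h}\geq 0$ on $\R^n\setminus E_0$, and the inequality is strict unless $|E\setminus E_0|=0$ (using $|\partial E_0|=0$). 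Hence $\cG(E\cap E_0,h,E_0)\leq \cG(E,h,E_0)$ with equality forcing $E\subseteq E_0$ a.e., which proves the inclusion. To see that $E$ itself is a minimizing hull, pick any $F\supseteq E$ with $F\setminus E\Subset \R^n$ and test the minimality of $E$ against $F\cap E_0$, which still contains $\Omega$ because $E\subseteq E_0$. Applying $\Pe(F\cap E_0)\leq \Pe(F)$ from property~(1), and using $u_{0,h}\leq 0$ on $E_0$ so that $\int u_{0,h}\chi_{(F\cap E_0)\setminus E}\leq 0$, the inequality $\cG(E)\leq \cG(F\cap E_0)$ collapses to $\Pe(E)\leq \Pe(F)$, which is precisely \eqref{e.mc}.

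For part (ii), I would use the submodularity of perimeter together with the linearity of the potential term. Both $E\cup E'$ and $E\cap E'$ contain $\Omega$, hence are admissible competitors. Adding \eqref{e.union int} to the pointwise identity $\chi_{E\cup E'}+\chi_{E\cap E'}=\chi_E+\chi_{E'}$ integrated against $u_{0,h}$ gives
\[
\cG(E\cup E',h,E_0)+\cG(E\cap E',h,E_0)\leq \cG(E,h,E_0)+\cG(E',h,E_0).
\]
Since $E,E'$ are minimizers, each summand on the left is $\geq \cG(E,h,E_0)=\cG(E',h,E_0)$, so the reverse inequality also holds and equality must propagate to each term, showing $E\cup E'$ and $E\cap E'$ are minimizers as well.

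The cleanest part is (ii), which is pure lattice-type bookkeeping once \eqref{e.union int} is in hand. The main care needed is in (i): one must not confuse the admissibility condition $E\supseteq\Omega$ with the conclusion $E\subseteq E_0$, and one must respect the a.e. nature of the competitors (in particular the role of $|\partial E_0|=0$ to conclude $E\subseteq E_0$ from a potential term that vanishes only on a negligible set). The mild obstacle is then ensuring that the comparison set $F\cap E_0$ used to prove the minimizing hull property of $E$ is still a legitimate perturbation of $E$ supported in the same compact set as $F\setminus E$; this is immediate since $(F\cap E_0)\setminus E\subseteq F\setminus E\Subset\R^n$.
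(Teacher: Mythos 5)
Your proof is correct and follows essentially the same route as the paper: you test $E$ against $E\cap E_0$ to get $E\subseteq E_0$, then against $F\cap E_0$ to obtain the minimizing-hull property using the sign of $u_{0,h}$ on $E_0$, and for (ii) you combine submodularity of the perimeter with the linearity of the potential term exactly as in the paper. The only cosmetic remark is that the admissibility of $F\cap E_0$ is already immediate from $\Omega\subseteq E_0$ and $\Omega\subseteq E\subseteq F$, without needing to invoke $E\subseteq E_0$ for that particular point.
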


\begin{proof}
Let $u_{0,h} = h^{-1}d_0 \in L^\infty(\R^n)$, with $d_0$ the rescaled signed distance from $\de E_0$ in \eqref{e.di}, and for simplicity let us write $\cG(\cdot)$ for $\cG(\cdot, h, E_0)$.
We start proving that $E\subseteq E_0$.
Indeed, note that
\begin{align*}
\cG(E)&\leq \cG(E\cap E_0)=\Pe(E\cap E_0)+\int_{\R^n} u_{0,h}\,\chi_{E\cap E_0}\\
&\stackrel{\mathclap{\eqref{e.mc3}}}{\leq} \ \Pe(E)+\int_{\R^n}u_{0,h}\,\chi_{E} - \int_{\R^n}u_{0,h}\,\chi_{E\setminus E_0}\\
&= \cG(E) -\int_{\R^n}u_{0,h}\,\chi_{E\setminus E_0}.
\end{align*}
Since $u_{0,h} > 0$ in $\R^n\setminus E_0$, this implies $E\subseteq E_0$ a.e.

Next we show that $E$ is a minimizing hull. Let $E\subseteq F$ and $F\setminus E\Subset\R^n$. From the minimizing property of $E$ we infer the following:
\begin{align}\label{e.bo}
\cG(E) & = \Pe(E)+\int_{\R^n} u_{0,h}\,\chi_{E}\notag\\
&\leq \cG(F \cap E_0)\notag \\
&= \Pe(F\cap E_0)+\int_{\R^n} u_{0,h}\,\chi_{F\cap E_0}\notag\\
&\stackrel{\eqref{e.mc3}}{\leq} \Pe(F)+\int_{\R^n} u_{0,h}\,\chi_{F\cap E_0}.
\end{align}
From $E\subseteq E_0$ and \eqref{e.bo} we have that
\[
 \Pe(E) \leq \Pe(F) + \int_{\R^n} u_{0,h}\,\chi_{(F \setminus E) \cap E_0} \leq \Pe(F),
\]
where we used $u_{0,h}\vert_{E_0}\leq0$. This shows that $E$ is a minimizing hull.

Finally, let $E'$ be another minimizer of $\cG$.
From the minimizing property of $E$, we get
\begin{align}
\cG(E)&\leq \cG(E\cap E')=\Pe(E\cap E')+\int_{\R^n}u_h\,\chi_{E\cap E'}, \label{e.min u-i 1}\\
\cG(E)&\leq \cG(E\cup E')=\Pe(E\cup E')+\int_{\R^n}u_h\,\chi_{E\cup E'}. \label{e.min u-i 2}
\end{align}
Summing the two inequalities, we get
\begin{align*}
 2\, \cG(E) & \leq \Pe(E\cap E')+\Pe(E\cup E')+\int_{\R^n}u_h\,\chi_{E\cap E'} + \int_{\R^n}u_h\,\chi_{E\cup E'}\\
& \stackrel{\mathclap{\eqref{e.union int}}}{\leq} \; \Pe(E)+\Pe(E')+\int_{\R^n}u_h\,\chi_{E} + \int_{\R^n}u_h\,\chi_{E'}\\
& =\cG(E)+\cG(E').
\end{align*}
Since $E'$ is a minimizer, i.e.~$\cG(E)=\cG(E')$, we deduce that \eqref{e.min u-i 1} and \eqref{e.min u-i 2} are equalities, thus concluding that $E \cap E'$ and $E\cup E'$ are both minimizers of $\cG$.
\end{proof}

A simple first corollary of Lemma~\ref{l.mc} is the existence of a maximal minimizer for $\cG$.

\begin{corollary}\label{c.mc}
 Let $E_0$ be as in Lemma~\ref{l.mc}. Then, there exist a maximal minimizer $E_{{\max}}$ of $\cG$ in the following sense: if $E$ is any other minimizer of $\cG$, then $E \subseteq E_{{\max}}$.
\end{corollary}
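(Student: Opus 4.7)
The plan is to exhibit $E_{{\max}}$ as an $L^1$-limit of an increasing sequence of minimizers obtained by iterating the union stability property from Lemma~\ref{l.mc}(ii). The family of minimizers is nonempty by the existence argument in \S~\ref{ss.discrete}, and by Lemma~\ref{l.mc}(i) every minimizer is a.e.~contained in $E_0$, so their Lebesgue measures are bounded by $|E_0|$. Moreover, for any minimizer $E$ one has $\Pe(E) \leq \cG(E) + \|u_{0,h}\|_{L^\infty(E_0)}\,|E_0| \leq \cG(\Omega,h,E_0) + \|u_{0,h}\|_{L^\infty(E_0)}\,|E_0|$, so the perimeters are uniformly bounded as well.

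Set $m := \sup\{|E| : E \supseteq \Omega \text{ is a minimizer of } \cG(\cdot,h,E_0)\} \leq |E_0| < +\infty$ and pick a maximizing sequence $(E_k)_{k\in\N}$ of minimizers with $|E_k| \to m$. Define inductively
\[
\tilde E_1 := E_1, \qquad \tilde E_{k+1} := \tilde E_k \cup E_{k+1}.
\]
By Lemma~\ref{l.mc}(ii) applied inductively, each $\tilde E_k$ is again a minimizer of $\cG(\cdot,h,E_0)$ containing $\Omega$; by construction $(\tilde E_k)$ is nondecreasing, $\tilde E_k \subseteq E_0$, and $|\tilde E_k| \geq |E_k| \to m$. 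Since the sequence is monotone and contained in the fixed bounded set $E_0$, the characteristic functions converge a.e., hence in $L^1(\R^n)$ by dominated convergence, to $\chi_{E_{{\max}}}$ for some measurable $E_{{\max}} \subseteq E_0$ with $\Omega \subseteq E_{{\max}}$ a.e.~and $|E_{{\max}}| = m$.

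It remains to see that $E_{{\max}}$ is itself a minimizer. The perimeter term passes to the limit by lower semicontinuity (property (1) of \S~\ref{ss.caccioppoli}), while the bulk term
\[
\int_{\R^n} u_{0,h}(x)\,\chi_{\tilde E_k}(x)\,\dd x \;\longrightarrow\; \int_{\R^n} u_{0,h}(x)\,\chi_{E_{{\max}}}(x)\,\dd x
\]
converges by $L^1$ convergence and boundedness of $u_{0,h}$ on $E_0$. Therefore
\[
\cG(E_{{\max}},h,E_0) \leq \liminf_{k\to\infty} \cG(\tilde E_k,h,E_0) = \min\cG(\cdot,h,E_0),
\]
so $E_{{\max}}$ achieves the minimum. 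For the maximality property, let $F$ be any other minimizer. By Lemma~\ref{l.mc}(ii), $F \cup E_{{\max}}$ is again a minimizer, hence $|F \cup E_{{\max}}| \leq m = |E_{{\max}}|$, which forces $|F \setminus E_{{\max}}| = 0$, i.e.~$F \subseteq E_{{\max}}$ up to Lebesgue-null sets.

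I do not expect a genuine obstacle here: Lemma~\ref{l.mc} has already done the conceptual work, and what remains is the standard exhaustion argument. The only point to be careful about is that inclusion among minimizers is an a.e.~statement, which matches the convention on Caccioppoli representatives adopted in \S~\ref{ss.caccioppoli}.
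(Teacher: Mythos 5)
Your argument is correct and takes essentially the same approach as the paper: both define $E_{\max}$ as a volume-maximizing minimizer and then invoke Lemma~\ref{l.mc}(ii) to conclude that any other minimizer is contained in it. The only difference is that the paper states without justification that the supremum of volumes over minimizers is attained, whereas you supply a clean exhaustion argument (iterated unions of a maximizing sequence, each again a minimizer by Lemma~\ref{l.mc}(ii), followed by lower semicontinuity of the perimeter and $L^1$ convergence of the bulk term) that makes this implicit step explicit.
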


\begin{proof}
We define $E_{{\max}}$ as a minimizer which maximize the volume, i.e.
\begin{align}
 |E_{{\max}}| &= {\max}\big\{|E| \ : \ E \ \text{minimizer of} \ \cG\big\},\label{e.def1}
\end{align}
If $E$ is any other minimizer of $\cG$, from Lemma~\ref{l.mc} we deduce that $E\cup E_{{\max}}$ is also a minimizer. Hence, since
\[
|E_{{\max}}|\leq |E\cup E_{{\max}}|, 
\]
from \eqref{e.def1} we infer that $E \subseteq E_{{\max}}$.
\end{proof}

From now on, we will call the flow constructed from these special solutions the \textit{maximal} approximate flows.
Similarly, we deduce the following proposition from Lemma~\ref{l.mc}.

\begin{proposition}\label{p.mc flow}
Let $E_0\subseteq\R^n$ be a minimizing hull with
\[
\Omega\subset E_0 \quad \text{and} \quad |\de E_0|=0,
\]
and, for every $h>0$, let $E^{(h)}_{{\max}, t}$ denote the maximal flows.
Then, the following holds:
\begin{itemize}
\item[(i)] $E^{(h)}_{{\max},t}\subseteq E^{(h)}_{{\max},s}$ for every $0\leq s\leq t$;
\item[(ii)] $E^{(h)}_{{\max},t}$ is a minimizing hull for every $t\geq 0$.
\end{itemize}
\end{proposition}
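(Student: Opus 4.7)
The plan is to proceed by induction on the discrete step $i\in\N$, with Lemma~\ref{l.mc} (together with Corollary~\ref{c.mc}) serving as the induction engine. The inductive hypothesis I propagate is the stronger statement that $E^{(h)}_{\max,i}$ is a bounded closed minimizing hull satisfying $\Omega\subseteq E^{(h)}_{\max,i}$ and $|\de E^{(h)}_{\max,i}|=0$, so that Lemma~\ref{l.mc} is applicable at the next step with $E^{(h)}_{\max,i}$ playing the role of $E_0$. The base case $i=0$ is exactly the standing hypothesis on $E_0$.

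For the inductive step, I apply Lemma~\ref{l.mc}(i) with initial set $E^{(h)}_{\max,i}$. The functional $\cG(\cdot,h,E^{(h)}_{\max,i})$ admits a maximal minimizer $E^{(h)}_{\max,i+1}$ by Corollary~\ref{c.mc}, and the lemma tells me that every admissible minimizer of $\cG$ is a minimizing hull contained in $E^{(h)}_{\max,i}$. This immediately yields both the monotonicity $E^{(h)}_{\max,i+1}\subseteq E^{(h)}_{\max,i}$ and the minimizing-hull property of $E^{(h)}_{\max,i+1}$, which is the content of (i) and (ii) at integer indices. The containment $\Omega\subseteq E^{(h)}_{\max,i+1}$ is automatic from the constraint in the definition of $\cG$, boundedness is inherited from $E^{(h)}_{\max,i}\subseteq\cdots\subseteq E_0$, and closedness is ensured by passing to the canonical pointwise representative described in \S~\ref{ss.caccioppoli}.

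To close the induction I must verify $|\de E^{(h)}_{\max,i+1}|=0$. This is essentially a free byproduct of the regularity theory of Section~\ref{ss.discrete}: the $\Lambda$-minimality \eqref{e.L min prima}, the density estimates \eqref{e.density2}--\eqref{e.density2bis}, and the subsequent reduction to the classical obstacle problem yield $\de E^{(h)}_{\max,i+1}\in C^{1,1}$, which in particular forces the boundary to have vanishing Lebesgue measure. With the inductive hypothesis restored at step $i+1$, the induction closes and (i), (ii) hold for all integer indices.

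Finally, the extension to arbitrary real times $0\leq s\leq t$ is immediate from the piecewise-constant definition $E^{(h)}_t:=E^{(h)}_{\lfloor t\rfloor}$. The only step of genuine substance is thus the inductive application of Lemma~\ref{l.mc}(i); the rest is bookkeeping. If there is any obstacle, it is conceptual rather than technical: one must be careful that the \emph{maximal} minimizer, rather than an arbitrary one, is consistently chosen at each stage so that the sequence $(E^{(h)}_{\max,i})_{i\in\N}$ is well-defined and the monotonicity above refers to this canonical choice; this is precisely the role of Corollary~\ref{c.mc}.
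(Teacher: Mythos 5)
Your proposal is correct and follows essentially the same route as the paper, which proves the proposition by invoking Lemma~\ref{l.mc} inductively and noting that the regularity of the minimizers gives $|\de E_i^{(h)}|=0$ at each step, exactly as you spell out. One small refinement worth noting: to conclude $|\de E^{(h)}_{\max,i+1}|=0$ you do not actually need the full $C^{1,1}$ regularity (which uses $n\leq 7$); the density estimate \eqref{e.density2} alone rules out Lebesgue density points of $\de E^{(h)}_{\max,i+1}$ in every dimension, so the hypothesis of Lemma~\ref{l.mc} is restored without any dimensional restriction.
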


\begin{proof}
The proof follows readily from the previous Lemma~\ref{l.mc}, noticing that, by the regularity of the minimizers, it holds $|\de E_i^{(h)}|=0$.
\end{proof}

\subsection{Monotonicity}
In the proof of Theorem~\ref{t.main} we need also the following refined monotonicity property. The proof exploits the same arguments used above.

\begin{lemma}\label{l.inclusion}
Let $E_0$ and $F_0$  be two closed bounded minimizing hulls such that
\[
\Omega\subseteq E_0\subseteq F_0 \quad\text{and}\quad |\de E_0|=|\de F_0|=0.
\]
Then, the maximal minimizers $E_{{\max}}$ of $\cG(\cdot, h,E_0)$ and $F_{{\max}}$ of $\cG(\cdot, h,F_0)$ satisfy 
\begin{equation}\label{e.monotone}
 E_{{\max}}\subseteq F_{{\max}}
\end{equation}
\end{lemma}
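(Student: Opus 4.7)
The plan is to adapt the union/intersection argument of Lemma~\ref{l.mc}(ii) to the present cross-functional situation, where the two minimality relations involve distinct obstacle weights. Set $u_E := h^{-1} d_{E_0}$ and $u_F := h^{-1} d_{F_0}$, with $d_{E_0}, d_{F_0}$ the signed distances from $\de E_0, \de F_0$ as in \eqref{e.di}. The decisive observation is the pointwise inequality
\[
u_E(x) \geq u_F(x) \qquad \forall \, x\in\R^n,
\]
which follows from $E_0\subseteq F_0$ by writing $d_A(x) = \dist(x,A)-\dist(x,\R^n\setminus A)$ and noting that each summand moves in the favorable direction under set inclusion.

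Next I would introduce
\[
G := E_{{\max}}\cap F_{{\max}}, \qquad H := E_{{\max}}\cup F_{{\max}}.
\]
Both contain $\Omega$ (since $E_{{\max}},F_{{\max}}\supseteq\Omega$), hence $G$ is admissible for $\cG(\cdot,h,E_0)$ and $H$ is admissible for $\cG(\cdot,h,F_0)$. Testing $E_{{\max}}$ against $G$ and $F_{{\max}}$ against $H$ and summing the two minimality inequalities, using the identities $E_{{\max}}\setminus G = H\setminus F_{{\max}} = E_{{\max}}\setminus F_{{\max}}$, I obtain
\[
\Pe(E_{{\max}})+\Pe(F_{{\max}})-\Pe(G)-\Pe(H) \;\leq\; -\int_{E_{{\max}}\setminus F_{{\max}}}(u_E - u_F)\,\dd x.
\]

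By the submodularity \eqref{e.union int} of perimeter the left-hand side is $\geq 0$, while the right-hand side is $\leq 0$ by $u_E\geq u_F$; hence both sides vanish, and the individual inequalities must hold with equality. In particular $H$ is itself a minimizer of $\cG(\cdot,h,F_0)$, so the maximality supplied by Corollary~\ref{c.mc} yields $E_{{\max}}\cup F_{{\max}}\subseteq F_{{\max}}$, which is the desired inclusion. The only genuinely non-trivial ingredient is the monotonicity of the signed distance under set inclusion: it is precisely what makes the cross-terms cooperate with submodularity when the two obstacle potentials differ. Admissibility of $G$ and $H$, the sign of the perimeter term, and the closing appeal to maximality are all immediate from the framework built in \S~\ref{s.mc flow}.
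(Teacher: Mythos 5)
Your proof is correct and follows essentially the same route as the paper's: test $E_{\max}$ against $E_{\max}\cap F_{\max}$, test $F_{\max}$ against $E_{\max}\cup F_{\max}$, sum, use submodularity and $u_E\geq u_F$ to force equality, and invoke maximality to conclude. The only (harmless) cosmetic difference is that you observe $u_E\geq u_F$ globally, whereas the paper restricts the comparison to $E_0$, which suffices because $E_{\max}\setminus F_{\max}\subseteq E_{\max}\subseteq E_0$.
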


\begin{proof}
For simplicity, set $u_0:=h^{-1}d_{\de E_0}$ and $u_1:=h^{-1}d_{ \de F_0}$, where $d_{\de E_0}$ and $d_{\de F_0}$ are the signed distances from $\de E_0$ and $\de F_0$ respectively, as defined in \eqref{e.di}.
Using the minimizing properties, we get:
\begin{align}
\Pe(E_{\max})+\int_{\R^n}u_0\,\chi_{E_{\max}}&\leq \Pe(E_{\max}\cap F_{\max})+\int_{\R^n}u_0\,\chi_{E_{\max}\cap F_{\max}},\label{e.min0}\\
\Pe(F_{\max})+\int_{\R^n}u_1\,\chi_{F_{\max}}&\leq \Pe(E_{\max}\cup F_{\max})+\int_{\R^n}u_1\,\chi_{E_{\max}\cup F_{\max}}.\label{e.min1}
\end{align}
Summing these two inequalities, and using \eqref{e.union int}, we get
\[
\int_{\R^n}u_0\,\chi_{E_{\max}}+\int_{\R^n}u_1\,\chi_{F_{\max}}\leq 
\int_{\R^n}u_0\,\chi_{E_{\max}\cap F_{\max}}+\int_{\R^n}u_1\,\chi_{E_{\max}\cup F_{\max}},
\]
which in turn implies
\begin{equation}\label{e.abs}
\int_{\R^n}(u_0-u_1)\,\chi_{E_{\max}\setminus F_{\max}} \leq 0.
\end{equation}
Since $u_0 \geq u_1$ in $E_0$ and $E_{\max}\subseteq E_0$ by Lemma~\ref{l.mc}, we infer that \eqref{e.abs} is an inequality.
This implies that also \eqref{e.min0} and \eqref{e.min1} are equalities, i.e.~$E_{\max}\cup F_{\max}$ is a minimizer of $\cG(\cdot, h_1, F_0)$.
By maximality of the solution, we conclude \eqref{e.monotone}.
\end{proof}

\section{Least barrier}\label{s.mc hull}
In this section we prove Theorem~\ref{t.main}.
We show that the mean-convex hull $\Omega^{mc}$ can be characterized by the approximate asymptotic evolutions (well defined thanks to Proposition~\ref{p.mc flow} (i)):
\[
 E^{(h)}_{{\max},\infty} := \bigcap_{t \geq 0} E^{(h)}_{{\max},t}.
\]

To this aim, we start showing the regularity of such asymptotics.

\subsection{The asymptotic limit $E^{(h)}_{{\max},\infty}$}\label{ss.asympt}
From Proposition~\ref{p.mc flow} and (2) \S~\ref{s.mc flow}, it follows that $E^{(h)}_{{\max},\infty}$ is a minimizing hull for every $h>0$. In this section we prove that every $E_{{\max}, \infty}^{(h)}$ is stationary under the approximate mean curvature flow with obstacle and enjoys uniform regularity properties.

\begin{proposition}\label{p.asymptotic}
For every $0<h' \leq h$, $E_{{\max},\infty}^{(h)}$ is the maximal minimizer of $\cG(\cdot, h', E_{{\max},\infty}^{(h)})$.
In particular, $E^{(h)}_{{\max},\infty} \subseteq E^{(h')}_{{\max},\infty}$.
\end{proposition}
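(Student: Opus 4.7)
The plan is to show that $E_\infty := E^{(h)}_{\max,\infty}$ is stationary for the discrete flow at its own step $h$, and then to deduce the result for every $h' \leq h$ and the containment of asymptotic limits via Lemmas~\ref{l.mc}--\ref{l.inclusion}.

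First I need some preliminary regularity of $E_\infty$. The sequence $E^{(h)}_{\max,i}$ is non-increasing and consists of minimizing hulls by Proposition~\ref{p.mc flow}(ii), so $\chi_{E^{(h)}_{\max,i}}\to\chi_{E_\infty}$ in $L^1$, and property (2) of \S~\ref{s.mc flow} ensures that $E_\infty$ is itself a minimizing hull. The uniform $\Lambda$-minimality of the $E^{(h)}_{\max,i}$ with $\Lambda=\sigma/h$ independent of $i$ yields density estimates \eqref{e.density2}--\eqref{e.density2bis} that are uniform in $i$ and transfer to $E_\infty$, so in particular $|\de E_\infty|=0$; combined with $L^1$ convergence this gives Hausdorff convergence $\de E^{(h)}_{\max,i}\to \de E_\infty$, and hence uniform convergence on bounded sets of the distance functions $\dist_i:=\dist(\cdot,\de E^{(h)}_{\max,i})\to\dist(\cdot,\de E_\infty)$.

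The core claim is that $E_\infty$ minimizes $\cF(\cdot,h,E_\infty)$ among all sets containing $\Omega$, i.e.
\begin{equation*}
\Pe(E_\infty)\leq \Pe(F)+\frac{1}{h}\int_{F\sdif E_\infty}\dist(x,\de E_\infty)\,dx\quad \forall\,F\supseteq\Omega.
\end{equation*}
Arguing as in the opening of the proof of Lemma~\ref{l.mc}, the minimizing hull property of $E_\infty$ combined with \eqref{e.mc3} implies $\cF(F\cap E_\infty,h,E_\infty)\leq \cF(F,h,E_\infty)$, so it suffices to test against $F\subseteq E_\infty$. For such an $F$, use it as competitor in the minimality of $E^{(h)}_{\max,i+1}$ for $\cF(\cdot,h,E^{(h)}_{\max,i})$; the inclusions $F\subseteq E_\infty\subseteq E^{(h)}_{\max,i+1}\subseteq E^{(h)}_{\max,i}$ allow the decomposition
\begin{equation*}
E^{(h)}_{\max,i}\setminus F=\bigl(E^{(h)}_{\max,i}\setminus E^{(h)}_{\max,i+1}\bigr)\cup\bigl(E^{(h)}_{\max,i+1}\setminus E_\infty\bigr)\cup\bigl(E_\infty\setminus F\bigr),
\end{equation*}
and, after cancelling the integral over $E^{(h)}_{\max,i}\setminus E^{(h)}_{\max,i+1}$ appearing on both sides, one obtains
\begin{equation*}
\Pe(E^{(h)}_{\max,i+1})\leq \Pe(F)+\frac{1}{h}\int_{E^{(h)}_{\max,i+1}\setminus E_\infty}\dist_i\,dx+\frac{1}{h}\int_{E_\infty\setminus F}\dist_i\,dx.
\end{equation*}
Letting $i\to\infty$: lower semicontinuity gives $\Pe(E_\infty)\leq\liminf\Pe(E^{(h)}_{\max,i+1})$; the first error integral vanishes since $|E^{(h)}_{\max,i+1}\setminus E_\infty|\to 0$ and $\dist_i$ is uniformly bounded on the bounded set $E_0$; and the last integral converges to $\tfrac{1}{h}\int_{E_\infty\setminus F}\dist(x,\de E_\infty)\,dx$ by the uniform convergence of $\dist_i$. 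This gives the stationarity inequality.

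The remaining assertions follow quickly. For $0<h'\leq h$ one has $1/h'\geq 1/h$, so the same inequality holds with $h$ replaced by $h'$, showing $E_\infty$ is a minimizer of $\cF(\cdot,h',E_\infty)$, hence of $\cG(\cdot,h',E_\infty)$; maximality is immediate from Lemma~\ref{l.mc}(i), which forces every such minimizer to be contained in the initial set $E_\infty$. For the containment $E^{(h)}_{\max,\infty}\subseteq E^{(h')}_{\max,\infty}$, apply Lemma~\ref{l.inclusion} inductively to the pair $E_\infty\subseteq E_0$: the maximal minimizer of $\cG(\cdot,h',E_\infty)$ (which is $E_\infty$ itself, by the stationarity just proved) is contained in the maximal minimizer of $\cG(\cdot,h',E_0)=E^{(h')}_{\max,1}$, and iterating yields $E_\infty\subseteq E^{(h')}_{\max,k}$ for every $k$, whence $E_\infty\subseteq E^{(h')}_{\max,\infty}$. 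The main obstacle is the uniform convergence of $\dist_i$, which rests on Hausdorff convergence of the boundaries---a standard but technical consequence of $L^1$ convergence together with uniform density estimates for $\Lambda$-minimizers.
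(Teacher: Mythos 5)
Your proof is correct and follows essentially the same route as the paper's: pass to the limit in the discrete minimality of $E^{(h)}_{\max,i+1}$ against a fixed competitor, using lower semicontinuity of the perimeter together with locally uniform convergence of the distance functions $d_i\to d_\infty$, then upgrade from step $h$ to every $h'\leq h$ using the sign of the signed distance inside, and conclude the containment of the asymptotics by induction via Lemma~\ref{l.inclusion}. The only differences are stylistic: you argue the stationarity inequality directly via a set decomposition and an explicit cancellation, whereas the paper runs the same limit by contradiction using the semicontinuity of $\cG$ along the diagonal; and you spell out the reduction to competitors $F\subseteq E_\infty$ via~\eqref{e.mc3} and the uniform convergence of distances via Hausdorff convergence of boundaries, points the paper leaves implicit.
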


\begin{proof}
We start proving that $E_{{\max},\infty}^{(h)}$ is a minimizer of $\cG(\cdot, h, E_{{\max},\infty}^{(h)})$.
We proceed by contradiction. Assume there exists $F\subset E_{\max,\infty}^{(h)}$ such that
\begin{equation}\label{e.contra}
 \cG\big(F, h, E_{{\max},\infty}^{(h)}\big) < \cG\big(E_{{\max},\infty}^{(h)}, h, E_{{\max},\infty}^{(h)}\big).
\end{equation}
Note that, by the semicontinuity of the perimeter (1) \S~\ref{ss.caccioppoli} and the locally uniform convergence $d_i\to d_\infty$, where $d_\infty$ is the signed distance to $\de E_{\max,\infty}^{(h)}$ as in \eqref{e.di}, we have
\begin{gather}
 \cG\big(F, h, E_{{\max},\infty}^{(h)}\big) = \lim_{i\to+\infty} \cG\big(F, h, E_{{\max},i}^{(h)}\big),
 \label{e.cont}\\
 \cG\big(E_{{\max},\infty}^{(h)}, h, E_{{\max},\infty}^{(h)}\big) \leq \liminf_{i\to\infty} \cG \big(E_{{\max},i+1}^{(h)}, h, E_{{\max},i}^{(h)}\big).
\label{e.semicont}
\end{gather}
From \eqref{e.contra}, \eqref{e.cont} and \eqref{e.semicont}, we infer that, for $i$ big enough,
\[
 \cG\big(F, h, E_{{\max},i}^{(h)}\big) < \cG\big(E_{{\max},i+1}^{(h)}, h, E_{{\max},i}^{(h)}\big),
\]
thus contrasting with the minimizer property of $E_{{\max},i+1}^{(h)}$.

Now, note that 
\[
 \cG\big(E_{{\max},\infty}^{(h)}, h, E_{{\max},\infty}^{(h)}\big) \leq \cG\big(F, h, E_{{\max},\infty}^{(h)}\big),\quad\forall \; F \subseteq E_{{\max},\infty}^{(h)}
\]
implies that, for all $h'\leq h$, (recall that $d(\cdot, \de E^{(h)}_{\max,\infty}) \leq 0$ on $\de E^{(h)}_{\max, \infty}$)
\begin{align*}
 \Pe \big(E_{{\max},\infty}^{(h)}\big) & \leq \Pe(F) - \int_{E_{{\max},\infty}^{(h)} \setminus F}h^{-1} d(x, \de E_{{\max},\infty}^{(h)}) \\
 & \leq \Pe(F) - \int_{E_{{\max},\infty}^{(h)} \setminus F}h'^{-1} d(x, \de E_{{\max},\infty}^{(h)}),
\end{align*}
which, in turns, leads to the minimizing property for $\cG(\cdot, h',E^{(h)}_{\max, \infty})$:
\[
 \cG\big(E_{{\max},\infty}^{(h)}, h', E_{{\max},\infty}^{(h)}\big) \leq \cG\big(F, h', E_{{\max},\infty}^{(h)}\big).
\]
Finally, since $E_{{\max},\infty}^{(h)} \subseteq E_0$, the last assertion follows by induction from Lemma~\ref{l.inclusion}.
\end{proof}

In particular, recalling the regularity theory for almost minimizers of the perimeter (see also Appendix~\ref{a.almost min}), it follows from Proposition~\ref{p.asymptotic} that $E_{{\max},\infty}^{(h)}$ is $C^{1,1}$ regular and, moreover, the asymptotic approximate evolutions $E_{{\max},\infty}^{(h)}$ have a $L^1$-limit as $h\to 0$,
\[
 E_{{\max},\infty}^{(h)} \uparrow E_{{\max},\infty} := \bigcup_{h>0} E_{{\max},\infty}^{(h)}.
\]

In order to show regularity estimates for the limit $E_{{\max},\infty}$, we prove in the next proposition that uniform $C^{1,1}$ estimates (i.e.~independent of $h$) hold for the approximate asymptotics.
In the sequel, for any set $E\subset\R^n$ such that $\de E \in C^{1,1}$, $\|A_{\de E}\|_{L^\infty}$ denotes the length of the second fundamental form of the boundary.

\begin{proposition}\label{p.uniform}
There exists a dimensional constant $c_0=c_0(n)>0$ such that
\begin{equation}\label{e.uniform}
 \|A_{\de E^{(h)}_{{\max}, \infty}}\|_{L^\infty} \leq c_0\, \|A_{\de \Omega}\|_{L^\infty} \quad \forall \; h>0.
\end{equation}
\end{proposition}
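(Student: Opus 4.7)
The bound \eqref{e.uniform} is scale-invariant, so I argue by contradiction through a blow-up/compactness scheme. After normalizing $\|A_{\de\Omega}\|_{L^\infty}=1$, suppose there exist $h_k>0$ and $p_k\in\de E^{(h_k)}_{{\max},\infty}$ at which $M_k := |A_{\de E^{(h_k)}_{{\max},\infty}}|(p_k)\to\infty$, chosen to realize (up to a universal factor) the $L^\infty$-maximum of the second fundamental form on each surface.

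The first task is to extract the analytic content of Proposition~\ref{p.asymptotic}. Since $E^{(h)}_{{\max},\infty}$ minimizes $\cG(\cdot, h', E^{(h)}_{{\max},\infty})$ for every $h'\le h$, the first variation yields that $\de E^{(h)}_{{\max},\infty}\setminus\de\Omega$ is a classical minimal hypersurface, while its contact set with $\de\Omega$ is a closed subset of a $C^{1,1}$ surface with $|A|\le\|A_{\de\Omega}\|_{L^\infty}$. Coupled with the minimizing-hull property (Proposition~\ref{p.mc flow}(ii)), the second variation on the free part gives a weighted stability inequality of the form $\int|A|^2\phi^2 \le \int|\nabla\phi|^2 + h'^{-1}\int\phi^2$ for every $\phi\in C^1_c$, reducing to classical stability once $h'$ is chosen small relative to the support scale of $\phi$. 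Rescaling by $M_k$ about $p_k$ produces $\tilde\Sigma_k := M_k(\de E^{(h_k)}_{{\max},\infty}-p_k)$ with $|A|\le 2$ and $|A|(0)\ge 1$, and rescaled obstacles $\tilde\Omega_k := M_k(\Omega-p_k)$ with $\|A_{\de\tilde\Omega_k}\|_{L^\infty}\le 1/M_k\to 0$. The uniform $C^{1,1}$ bound, combined with the density estimates for almost-minimizers of the perimeter, provides $C^{1,\alpha}_{\rm loc}$ precompactness and thus a subsequential limit $\Sigma_\infty$ with $|A|(0)\ge 1$, bounding a minimizing hull in $\R^n$ thanks to the closure property~(2) of \S~\ref{s.mc flow}. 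Depending on whether $M_k\,\dist(p_k,\de\Omega)\to\infty$ or remains bounded, the rescaled obstacles either escape to infinity or converge to a hyperplane $L$.

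The contradiction then follows from a Bernstein-type classification in dimension $n\le 7$. In the interior (free) case, $\Sigma_\infty$ is a complete, two-sided, stable minimal hypersurface in $\R^n$ with polynomial volume growth (from $\Lambda$-minimality), so by the Schoen--Simon curvature estimates it must be a hyperplane, contradicting $|A|(0)\ge 1$. In the contact case, $\Sigma_\infty$ lies in the closed halfspace bounded by $L$ and is tangent to $L$ at a point within bounded distance of the origin; writing it locally as a non-negative $C^{1,1}$ graph over $L$ satisfying the minimal-surface equation on its positivity set, the strong maximum principle (Hopf boundary-point lemma) forces the graph to vanish in a neighborhood of the contact point, so $|A_{\Sigma_\infty}|\equiv 0$ there, a contradiction.

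The main obstacle is the contact-blowup case: one must couple carefully the $C^{1,1}$ tangential matching between $\de E^{(h)}_{{\max},\infty}$ and $\de\Omega$ (via the obstacle-problem regularity of Appendix~\ref{a.almost min}) with a sufficiently strong maximum principle for the minimal-surface equation against a planar obstacle, in order to rigidly exclude non-flat limits. Passing the weighted stability to the blow-up limit is equally delicate: it requires choosing $h'\le h_k$ so that $h'^{-1}r^2\to 0$ at the rescaled test-function scale $r$, which is immediate when $M_k\sqrt{h_k}\to\infty$ but needs additional care in the opposite regime.
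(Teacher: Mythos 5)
Your proposal follows the paper's scheme (blow-up at a point of near-maximal curvature, split into a far-from-obstacle case handled by Schoen--Simon and a contact case handled by a strong maximum principle), but there is a genuine gap in how you establish stability of the free part of the surface. You derive a stability inequality from the second variation of $\cG(\cdot,h',E^{(h)}_{\max,\infty})$, picking up the term $h'^{-1}\int\phi^2$ from the distance-penalty. After rescaling by $r_k=M_k^{-1}$ this term is of relative size $r_k^2\,h'^{-1}\le M_k^{-2}h_k^{-1}$, and you correctly note that it only vanishes when $M_k\sqrt{h_k}\to\infty$; since Proposition~\ref{p.asymptotic} allows only $h'\le h$ (and not $h'\to\infty$), the opposite regime $M_k\sqrt{h_k}\not\to\infty$ is not covered, and you acknowledge this but do not resolve it. The paper avoids the problem altogether: the Euler--Lagrange equation for $\cG(\cdot,h,E^{(h)}_{\max,\infty})$ at its own fixed point reads $H_{M_h}=h^{-1}\,d(\cdot,\de E^{(h)}_{\max,\infty})\equiv 0$, so $M_h$ is an exactly minimal hypersurface, and the minimizing-hull property of $E^{(h)}_{\max,\infty}$ makes $M_h$ one-sided (outward) area minimizing \emph{with no distance penalty}. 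One-sided minimization of area about a minimal surface gives $\int|\nabla\phi|^2-|A|^2\phi^2\ge 0$ for $\phi\ge 0$, and then for general $\phi$ by replacing $\phi$ with $|\phi|$ (which leaves both integrals unchanged). This is clean classical stability, with no $h'$-dependent error, and is what makes the Schoen--Simon estimate applicable to every rescaled sequence.

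A secondary point: in the contact case you invoke the Hopf boundary-point lemma to get $|A_{\Sigma_\infty}|\equiv 0$ \emph{near the contact point}, but the blow-up normalization fixes $|A_{\Sigma_\infty}|(0)\ge 1$ at the \emph{origin}, which need not be the contact point. To close the argument one must propagate the local flatness from the contact point to $0$; the paper does this by a unique continuation argument for the minimal-surface equation, concluding $F=H$ globally and thereby contradicting $|A_{\de F}(0)|>0$. Without that propagation step the local vanishing of $A$ at the contact point is not by itself a contradiction.
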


\begin{proof}
We start noticing the following:
\begin{itemize}
 \item[(a)] without loss of generality, up to homotetically rescaling the obstacle $\Omega$, we can assume that $\|A_{\de \Omega}\|_{L^\infty} = 1$;
 \item[(b)] since each $E^{(h)}_{{\max},\infty}$ is a minimizing hull, then
\begin{equation}\label{e.unif per}
\Pe (E^{(h)}_{{\max},\infty}, B_r(p)) \leq n\,\omega_n\,r^{n-1} \quad \forall \; h,r>0;
\end{equation}
 \item[(c)] $M_h := \de E^{(h)}_\infty \setminus \Omega$ is a stable minimal hypersurface: this follows from the Euler--Lagrange equation for $\cG(\cdot,h, E^{(h)}_{{\max},\infty})$, i.e.
\[
 H_{M_h}(\cdot) = h^{-1} d (\cdot, \de E^{(h)}_{{\max},\infty}),
\]
and the one-sided area minimizing property of $M_h$.
\end{itemize}

The proof of \eqref{e.uniform} is made by contradiction via a blow-up argument. Assume there exist a sequence $h_k\to0$ and points $p_k\in \de E^{(h_k)}_{{\max},\infty}$ such that
\begin{equation}\label{e.absurd}
 \alpha_k := 2\,|A_{\de E^{(h_k)}_{{\max},\infty}}(p_k)| \geq \|A_{\de E^{(h_k)}_{{\max},\infty}}\|_{L^\infty} \to +\infty.
\end{equation}
Set $r_k := \alpha_k^{-1}$ and consider the translated and rescaled sets
\[
F_k := r_k^{-1}\,\big(E^{(h_k)}_{{\max},\infty} - p_k\big).
\]
Note that $0\in \de F_k$ and $\de F_k \in C^{1,1}$ with
\begin{equation}\label{e.curv}
 \|A_{\de F_k}\|_{L^\infty}\leq 1 \quad \text{and} \quad |A_{\de F_k}(0)| = \frac{1}{2}.
\end{equation}
By the uniform bound on the perimeters \eqref{e.unif per}, it holds
\begin{equation}\label{e.unif per2}
\Pe (F_k, B_R) \leq n\,\omega_n\,R^{n-1} \quad \forall \; R>0.
\end{equation}
Hence, by the compactness (1) \S~\ref{ss.caccioppoli} and \eqref{e.curv}, up to extracting a subsequence (here and in the sequel not relabelled), we can infer that $F_k$ converge locally to a set $F$ such that $0 \in \de F$ and $F\in C^{1,1}$. Moreover, since limits of minimizing hulls, by (2) \S~\ref{s.mc flow} also $F$ is a minimizing hull.

The contradiction is now reached as follows. If there exists a subsequence such that
\[
 r_k^{-1}\dist(p_k,\Omega)\to \infty,
\]
then, by the stability of $M_h$, $\de F$ is a stable minimal hypersurface in $\R^n$.
Since $n\leq 7$ and \eqref{e.unif per2} holds, by the Schoen--Simon curvature estimates \cite{SS} it follows that $F$ is a half space, thus contradicting \eqref{e.curv}.

On the other hand, if
\[
\sup _k r_k^{-1}\dist(p_k,\Omega)<\infty,
\]
then, from $\|A_{\de \Omega}\|_{L^\infty} = 1$ we infer that, up to extracting a subsequence, the rescaled obstacles
\[
\Omega_k := r_k^{-1}\big(\Omega -p_k\big)
\]
converge locally to a closed half space $H$.
If $\de F \cap H =\emptyset$, then we can argue as above and deduce that $F$ needs to be itself a half space, contradicting \eqref{e.curv}.

If there exists $p \in \de F \cap H$, by the $C^{1,1}$ regularity of $\de F$, one can find $r>0$ such that $B_r(p) \cap \de F$ is a graph over $\de \Omega$.
Since $F$ is a minimizing hull, $\de F \cap B_r(p)$ is a supersolution of the minimal surface equation.
Therefore, by the strong maximum principle $\de F$ coincides with $\de H$ in $B_r$ and, by a unique continuation argument, $F = H$, again contradicting \eqref{e.curv}.
\end{proof}

As a straightforward corollary of the above proposition, we have the following.

\begin{corollary}\label{c.reg asympt}
 Let $\Omega \subset \R^n$ be a closed $C^{1,1}$ set and $E_0\supset \Omega$ a closed minimizing hull with $|\de E_0|=0$.
Then, the following holds:
\begin{itemize}
 \item[(i)] $E_{{\max},\infty}$ is minimizing hull;
 \item[(ii)] $\de E_{{\max},\infty} \in C^{1,1}$ with uniform estimated
\[
\|A_{\de E^{}_{{\max}, \infty}}\|_{L^\infty} \leq c_0\, \|A_{\de \Omega}\|_{L^\infty};
\]
\item[(iii)] $\de E_{{\max},\infty} \setminus \Omega$ is a smooth minimal hypersurface.
\end{itemize}
\end{corollary}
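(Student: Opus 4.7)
The strategy is to pass to the limit $h \downarrow 0$ in the approximate asymptotics $E^{(h)}_{\max,\infty}$. By Proposition~\ref{p.asymptotic}, the family $\{E^{(h)}_{\max,\infty}\}_{h>0}$ is monotone increasing as $h\downarrow 0$, all sets lie in $E_0$, and their perimeters are uniformly bounded by $\Pe(E_0)$ (the discrete flow has non-increasing perimeter). Hence $\chi_{E^{(h)}_{\max,\infty}}\to\chi_{E_{\max,\infty}}$ in $L^1$ by monotone convergence, and the perimeters are uniformly controlled.

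For (i), each $E^{(h)}_{\max,\infty}$ is already known to be a minimizing hull (paragraph following Proposition~\ref{p.asymptotic}, which itself uses Proposition~\ref{p.mc flow}(ii) and the closure property (2) of \S~\ref{s.mc flow}). Applying that same closure property (2) to the $L^1$-convergent sequence $E^{(h)}_{\max,\infty}\to E_{\max,\infty}$ yields that $E_{\max,\infty}$ is itself a minimizing hull.

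For (ii), Proposition~\ref{p.uniform} gives the uniform estimate $\|A_{\de E^{(h)}_{\max,\infty}}\|_{L^\infty}\leq c_0\,\|A_{\de\Omega}\|_{L^\infty}$. At any point of $\de E^{(h)}_{\max,\infty}$, this curvature bound combined with the standard Calder\'on--Zygmund/graphing argument forces the boundary to be locally a graph over its tangent hyperplane on a scale independent of $h$, with $C^{1,1}$ norm uniformly bounded. Together with the $L^1$-convergence of the enclosed sets and the uniform lower density estimates enjoyed by all minimizing hulls (analogous to \eqref{e.density2}--\eqref{e.density2bis}), an Arzel\`a--Ascoli argument gives local $C^{1,\alpha}$-convergence of these graphs (every $\alpha<1$) to graphs describing $\de E_{\max,\infty}$. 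Thus $\de E_{\max,\infty}\in C^{1,1}$ with the same curvature bound.

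For (iii), on $\de E^{(h)}_{\max,\infty}\setminus\Omega$ the obstacle is inactive: by Proposition~\ref{p.asymptotic} the set $E^{(h)}_{\max,\infty}$ is locally a free minimizer of $\cG(\cdot,h,E^{(h)}_{\max,\infty})$ there, and the Euler--Lagrange equation is $H = h^{-1}\,d(\,\cdot\,,\de E^{(h)}_{\max,\infty})$, which vanishes identically on the boundary itself. Hence each $\de E^{(h)}_{\max,\infty}\setminus\Omega$ is a minimal hypersurface. Passing to the $C^{1,\alpha}$ limit from (ii), $\de E_{\max,\infty}\setminus\Omega$ is a $C^{1,1}$ solution of the minimal surface equation, and hence smooth (indeed analytic) by classical quasilinear elliptic regularity.

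The only genuine subtlety is part (ii): one must ensure that no portion of $\de E^{(h)}_{\max,\infty}$ is lost in the limit, so that the $C^{1,\alpha}$ limit is precisely $\de E_{\max,\infty}$. This is guaranteed by the monotone inclusion $E^{(h)}_{\max,\infty}\uparrow E_{\max,\infty}$, the uniform perimeter bound, and the uniform lower density estimates, which together force Hausdorff convergence $\de E^{(h)}_{\max,\infty}\to\de E_{\max,\infty}$ and rule out vanishing or collapsing pieces.
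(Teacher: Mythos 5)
Your proposal is correct and matches what the paper intends when it labels the corollary ``straightforward'' after Proposition~\ref{p.uniform}: (i) is exactly closure of minimizing hulls under $L^1$-convergence (property (2) of \S~\ref{s.mc flow}) applied to the monotone family $E^{(h)}_{\max,\infty}\uparrow E_{\max,\infty}$; (ii) is the uniform curvature bound of Proposition~\ref{p.uniform} together with local $C^{1,\alpha}$ graph compactness; and (iii) is the vanishing of the forcing term $h^{-1}d(\cdot,\de E^{(h)}_{\max,\infty})$ on $\de E^{(h)}_{\max,\infty}\setminus\Omega$ already noted in item (c) of the proof of Proposition~\ref{p.uniform}, passed to the limit and upgraded to smoothness by elliptic regularity. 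You also correctly flag the one point the paper elides -- that no boundary is lost in the limit -- and the monotone inclusion plus uniform $C^{1,1}$ graph control is indeed the right reason.
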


\subsection{Mean-convex hull}
Now we are ready for the proof of Theorem~\ref{t.main}.
The proof is made in several steps and the strategy is as follows: we construct a $C^{1,1}$ regular set containing $\Omega$ and show that it is actually the minimal barrier.

\subsubsection{Step 1}
Consider the closed $\eps$-neighbourhood of the obstacle $\Omega$:
\[
\Omega_\eps:=\big\{x:\dist(x,\Omega)\leq \eps\big\}.
\]
Note that $\Omega_\eps \downarrow \Omega$, i.e.
\[
\Omega_{\eps_1}\subseteq\Omega_{\eps_2}\quad\forall \; 0\leq\eps_1\leq\eps_2
\quad\text{and}\quad
\bigcap_{\eps>0}\Omega_\eps=\Omega_0.
\]
Moreover, by the $C^{1,1}$ regularity of $\de \Omega$ there exists $\eps_0>0$ such that $\de\Omega_\eps\in C^{1,1}$.
Let now $E_0$ be a closed convex set such that $\Omega_\eps\Subset\textup{int}(E_0)$ for every $\eps<\eps_0$ and let $E^{\eps}_{\max,\infty}$ be the asymptotic limit of the maximal flows starting at $E_0$ with respect to the obstacle $\Omega_\eps$.
Set 
\[
 \cE(\Omega) := \bigcap_{\eps>0} E^\eps_{\max,\infty}.
\]
We will show that $\Omega^{mc} = \cE(\Omega)$.

\subsubsection{Step 2}
The main ingredient for the proof of Theorem~\ref{t.main} is contained in the following proposition.

\begin{proposition}\label{p.max}
Let $\Omega$ and $E_0$ be as in Step 1.
Then, every minimal hypersurface $\Sigma$ with $\de \Sigma\subseteq \Omega$ is contained in $\cE(\Omega)$.
\end{proposition}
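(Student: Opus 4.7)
The plan is to prove, by induction on the discrete time index $i$, that for every $h,\eps>0$ the inclusion $\Sigma\subset E^{(h),\eps}_{{\max},i}$ holds, where $E^{(h),\eps}_{{\max},i}$ is the $i$-th iterate of the maximal approximate flow with obstacle $\Omega_\eps$ starting from the large convex set $E_0$ of Step~1. Once this is established, monotone passage to the limits $i\to\infty$, $h\to 0$, and $\eps\to 0$ (using Proposition~\ref{p.asymptotic} and the definition of $\cE(\Omega)$) gives $\Sigma\subset\cE(\Omega)$. The base case $i=0$ follows from the classical convex hull property of minimal hypersurfaces: since $E_0$ is convex and $\de\Sigma\subset\Omega\Subset E_0$, the maximum principle applied to the linear (hence $\Sigma$-harmonic) coordinate functions yields $\Sigma\subset\co(\de\Sigma)\subset E_0$.

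For the inductive step, assume $\Sigma\subset E^{(h),\eps}_{{\max},i}$ and argue by contradiction: suppose the protruding part $P:=\Sigma\setminus E$ is nonempty, where $E:=E^{(h),\eps}_{{\max},i+1}$. By the inductive hypothesis and Proposition~\ref{p.time unif}, $P$ lies in the thin slab $E^{(h),\eps}_{{\max},i}\setminus E$ of width at most $\gamma\sqrt{h}$. I would then apply the strong maximum principle at a point $p\in P$ attaining $d_0:=\max_{x\in\Sigma}\dist(x,E)\in(0,\gamma\sqrt{h}]$: by Proposition~\ref{p.uniform} the outward $d_0$-offset $\de N_{d_0}$ of $\de E$ is smooth near $p$ (and tangent to $\Sigma$ at $p$ by first-order maximality of $\phi:=\dist(\cdot,E)|_\Sigma$), while minimality $\vec H_\Sigma\equiv 0$ and second-order maximality force $H_{\de N_{d_0}}(p)\leq 0$. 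Expanding through the offset formula $H_{\de N_{d_0}}(p)=H_{\de E}(q)-d_0|A_{\de E}|^2(q)+O(d_0^2)$ and invoking the Euler--Lagrange equation $H_{\de E}(q)=-h^{-1}d_i(q)>0$ on the free boundary at the nearest point $q\in\de E$ (which applies after ruling out $q\in\de E^{(h),\eps}_{{\max},i}$ by the inductive hypothesis and $q\in\de\Omega_\eps$ via the strict containment $\Omega\subsetneq\Omega_\eps$) then yields the desired contradiction.

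The main obstacle is the quantitative sign analysis in the last expansion: since $H_{\de E}(q)=h^{-1}|d_i(q)|$ can itself be of order $O(\sqrt{h})$ while $d_0|A|^2(q)$ is of comparable size, the argument requires finer control on the discrete flow than that provided by Proposition~\ref{p.time unif} alone. A robust alternative bypasses the offset calculation by a variational competitor: choose a bounded region $W\Subset E^{(h),\eps}_{{\max},i}\setminus E$ enclosed between $P$ and a matching portion $Q\subset\de E$, and exploit the strictly negative sign of $u_{i,h}$ on $W$ together with the first variation of area of the minimal $\Sigma$ to show $\cG(E\cup W,h,E^{(h),\eps}_{{\max},i})\leq\cG(E,h,E^{(h),\eps}_{{\max},i})$, contradicting the maximality of $E$ via Lemma~\ref{l.mc}(ii). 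Either route completes the induction and, upon taking the monotone limits, proves the proposition.
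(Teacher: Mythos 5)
Your overall architecture — induction on the discrete time index $i$, with base case the convex hull property, and a contradiction at the first $i$ where $\Sigma$ protrudes — matches the paper. But neither of your two implementations of the inductive step actually closes, and the paper's key device is different from both.

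Your first route (touching an outward offset $\de N_{d_0}$ of $\de E^{(h),\eps}_{\max,i+1}$ at a farthest point) has a more serious problem than the quantitative one you flag. The invocation of Proposition~\ref{p.uniform} is out of place: that estimate concerns the asymptotic limits $E^{(h)}_{\max,\infty}$ only, whereas the intermediate iterates $E^{(h),\eps}_{\max,i+1}$ are merely $(Ch^{-1})$-minimizers with no $h$-uniform bound on $|A_{\de E}|$; in fact $|A_{\de E}|$ can be of size $h^{-1/2}$ or worse, so the second-order term $d_0|A_{\de E}|^2$ is genuinely out of control. Moreover, outward offsetting does not preserve mean-convexity (take $\kappa_1=-\kappa_2$: the offset acquires the wrong sign), so $\de N_{d_0}$ is in general not even a supersolution, and the strong maximum principle gives nothing. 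Your second route (a competitor $E\cup W$ in the $\cG$-minimization) has an unfixable gap: to conclude $\Pe(E\cup W)\le\Pe(E)$ one needs the protruding cap $P\subset\Sigma$ to have no more area than the replaced piece $Q\subset\de E$. That would require $P$ to be \emph{area-minimizing} with respect to its boundary, not merely stationary; since Proposition~\ref{p.max} is about arbitrary minimal hypersurfaces (including unstable ones), the first variation of area gives you no comparison of $\cH^{n-1}(P)$ and $\cH^{n-1}(Q)$, and the sign of $u_{i,h}$ on $W$ alone cannot compensate.

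The paper sidesteps both difficulties by translating $\Sigma$ rather than offsetting $\de E$, and by comparing against the \emph{previous} iterate, not the next one. Set $L := E^{(h),\eps}_{\max,i}$ (a minimizing hull, hence its $C^{1,1}$ boundary is mean-convex) and let $W\subset\bar\Sigma$ be the set of nearest points to $\de L$; the distance estimate of Proposition~\ref{p.time unif} and the assumption $\Sigma\setminus E^{(h),\eps}_{\max,i+1}\ne\emptyset$ force $\dist(\Sigma,\de L)\le\gamma\sqrt h$, while the condition $h<\eps^2/(4\gamma^2)$ keeps $\de\Sigma$ (which lies in $\Omega$, at distance $\ge\eps$ from $\de\Omega_\eps\subset L$) out of $W$. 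Pick a relative boundary point $x_0\in\de W$ and its nearest point $y_0\in\de L$, and translate: $\Sigma'=\Sigma+(y_0-x_0)$. Since every $z\in\Sigma$ has $B_{|x_0-y_0|}(z)\subset L$, the translate satisfies $\Sigma'\subset L$ and touches $\de L$ at $y_0$. Now the strong maximum principle applies between the \emph{still minimal} hypersurface $\Sigma'$ and the mean-convex barrier $\de L$, with no curvature expansion and no area-minimality needed; the resulting local identification of $\Sigma$ with a translate of $\de L$ shows $W$ is open in $\Sigma$, contradicting $x_0\in\de W$. This translation device is the ingredient missing from your proposal; I suggest you rework the inductive step around it.
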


\begin{remark}
Note that, in view of the counterexamples in \S~\ref{s.mc barrier}, it is essential that $E_0$ is not just a generic minimizing hull containing the obstacle $\Omega$.
\end{remark}

\begin{proof}
Let $E^{(h),\eps}_{{\max},t}$ denote the approximate maximal flows starting at $E_0$ with respect to the obstacle $\Omega_\eps$.
We show that, for every minimal hypersurface $\Sigma$ with $\de \Sigma\subset \Omega$, it holds $\Sigma \subset E_{{\max},\infty}^{(h),\eps}$ for every $\eps>0$ and $h < \eps^2/(4 \gamma^2)$,
where $\gamma$ is the constant in Proposition~\ref{p.time unif}. 
This implies that
\[
 \Sigma \subset \bigcup_{h>0}E_{{\max},\infty}^{(h),\eps} = E^\eps_{\max,\infty}\quad \forall\; \eps>0,
\]
thus proving the proposition.

The proof of the claim is by contradiction. Assume there exists $i\in \N$ such that
\begin{equation}\label{e.abs3}
\Sigma\subset E^{(h)}_{{\max}, i} \quad\text{and}\quad \Sigma\setminus E^{(h)}_{{\max}, i+1}\neq \emptyset.
\end{equation}
Note that here we used the convex hull property for minimal surfaces which implies $\Sigma \subset E_0$.
Set for simplicity of notation $L:=E^{(h)}_{{\max}, i}$ and consider the closed set of points of minimum distance between $\de L$ and $\bar\Sigma$:
\[
W:=\big\{x\in\bar\Sigma\,:\,\dist(x,\de L)=\dist\big(\Sigma,\de L\big)\big\}.
\]
From Proposition~\ref{p.time unif} and \eqref{e.abs3}, we deduce that
\begin{equation*}
\dist(\Sigma,\de L)\leq \gamma\,\sqrt{h}.
\end{equation*}
Hence, since $2\,\gamma\,\sqrt{h}<\eps$ and $\de \Sigma\subset\Omega$ is distant at least $\eps$ from $\Omega_\eps$, the minimum distance is reached in the interior of $\Sigma$, i.e.~$W\subset\Sigma$.
Let $x_0\in W$ be a boundary point of $W\subset\Sigma$ for the induced topology, i.e.
\begin{equation}\label{e.bdy}
B_r(x_0)\cap(\Sigma\setminus W)\neq\emptyset\quad\forall\;r>0,
\end{equation}
and let $y_0\in\de L$ be such that $\dist(\Sigma,W)=|x_0-y_0|$.
Consider
\[
\Sigma'=\Sigma+y_0-x_0.
\]
We have that $\Sigma'\subset L$ and $\Sigma'\cap \de L\neq \emptyset$.
We can apply the classical strict maximum principle for the minimal surface equation and conclude that $\Sigma'\equiv \de L$ in a neighborhood of $x_0$, against \eqref{e.bdy}.
\end{proof}

\subsubsection{Step 3}
Next we notice that $\cE(\Omega)$ satisfies the regularity conclusion of Theorem~\ref{t.main}. Indeed, by the uniform estimate in Corollary~\ref{c.reg asympt} (ii), it follows that
\begin{equation}\label{e.c1}
\de \cE(\Omega) \in C^{1,1}. 
\end{equation}
Moreover, again appealing to the uniform estimates of the corollary, we have that $\de \cE(\Omega) \setminus \Omega$ is locally the limit of $\de E^\eps_{\max,\infty} \setminus \Omega_\eps$. Hence, from Corollary~\ref{c.reg asympt} (iii) we deduce that $\de\cE(\Omega)\setminus \Omega$ is a minimal hypersurface with boundary on $\de\Omega$.

\subsubsection{Step 4}
Next we show that $\cE(\Omega)$ is actually a global barrier.

\begin{proposition}\label{p.global}
Let $\Omega$ and $E_0$ be as in Step 1.
Then, $\cE(\Omega)$ is a global barrier,i.e.
\[
\Sigma \;\text{minimal hypersurface, }\;\de\Sigma\subset \cE(\Omega)\quad\Longrightarrow\quad \Sigma\subset\cE(\Omega).
\]
\end{proposition}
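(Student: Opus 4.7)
My plan is to prove Proposition~\ref{p.global} by iterating the construction of $\cE$ a second time, using $\cE(\Omega)$ itself as the obstacle in place of $\Omega$. This is legitimate because, by Step~3, $\cE(\Omega)$ is bounded and has $C^{1,1}$ boundary, so the entire machinery of \S\ref{s.flow}--\S\ref{s.mc flow} and of Steps~1--3 applies to $\cE(\Omega)$ in place of $\Omega$. For every sufficiently small $\eps>0$ the enlargement $\cE(\Omega)_\eps$ is $C^{1,1}$; I would consider the corresponding maximal approximate flows $\tilde E^{(h),\eps}_{{\max},t}$ with initial datum $E_0$ and obstacle $\cE(\Omega)_\eps$, together with
\begin{equation*}
\tilde\cE(\cE(\Omega)) := \bigcap_{\eps>0}\bigcup_{h>0}\tilde E^{(h),\eps}_{{\max},\infty}.
\end{equation*}
The argument of Proposition~\ref{p.max} then transfers verbatim: its only input, beyond the convex-hull containment $\Sigma\subset E_0$, was the distance bound $\dist(\de\Sigma,\de\Omega_\eps)\geq\eps$ used to rule out accumulation of closest-point contact on $\de\Sigma$. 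The present hypothesis $\de\Sigma\subset\cE(\Omega)$ gives the analogous bound $\dist(\de\Sigma,\de\cE(\Omega)_\eps)\geq\eps$, so the word-for-word repetition of the proof yields $\Sigma\subset \tilde\cE(\cE(\Omega))$.

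The proof of the proposition is thereby reduced to the identity $\tilde\cE(\cE(\Omega))=\cE(\Omega)$. The inclusion $\cE(\Omega)\subset\tilde\cE(\cE(\Omega))$ is immediate, since $\cE(\Omega)\subset\cE(\Omega)_\eps\subset\tilde E^{(h),\eps}_{{\max},\infty}$. For the reverse inclusion I would use that, by Corollary~\ref{c.reg asympt}, $\cE(\Omega)$ is itself a $C^{1,1}$ minimizing hull; an elementary computation analogous to the one in the proof of Lemma~\ref{l.mc} gives, for $E\supset\cE(\Omega)$,
\begin{equation*}
\cG(E,h,\cE(\Omega))-\cG(\cE(\Omega),h,\cE(\Omega))=[\Pe(E)-\Pe(\cE(\Omega))]+h^{-1}\!\int_{E\setminus\cE(\Omega)}\!\dist(x,\de\cE(\Omega))\,\dd x\geq 0,
\end{equation*}
so that $\cE(\Omega)$ is the unique minimizer of $\cG(\cdot,h,\cE(\Omega))$ among $E\supset\cE(\Omega)$ and thus a stationary point of the approximate flow. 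Combined with the monotonicity of the flow (Lemma~\ref{l.inclusion}) and the uniform $C^{1,1}$ estimates of Proposition~\ref{p.uniform} applied to the new obstacle, this is the mechanism by which one expects $\tilde\cE(\cE(\Omega))=\cE(\Omega)$.

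The main technical obstacle will be the rigorous passage $\eps\to 0$ in this identification. For fixed $\eps>0$ the enlargement $\cE(\Omega)_\eps$ is generally not mean-convex, so $\tilde E^\eps_{{\max},\infty}$ may strictly contain $\cE(\Omega)_\eps$, with the extra ``bulge'' enclosed by a stable minimal hypersurface whose boundary lies on $\de\cE(\Omega)_\eps$. Applying the uniform $C^{1,1}$ estimates from Proposition~\ref{p.uniform}, such outer minimal hypersurfaces converge as $\eps\to 0$ to a stable minimal hypersurface with boundary on $\de\cE(\Omega)$, meeting $\de\cE(\Omega)\setminus\Omega$ (itself minimal by Step~3) tangentially along that common boundary curve. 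The strong boundary maximum principle for two minimal hypersurfaces meeting tangentially then forces them to coincide, so the bulge must collapse and $\tilde\cE(\cE(\Omega))=\cE(\Omega)$. Handling the possibility that the common boundary passes through $\de\cE(\Omega)\cap\de\Omega$, where $\de\cE(\Omega)$ is only mean-convex rather than minimal, is the subtlest point and would be treated by the same Hopf-type comparison used in Proposition~\ref{p.max}.
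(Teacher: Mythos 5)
Your high-level strategy coincides with the paper's: invoke Proposition~\ref{p.max} to reduce the barrier property of $\cE(\Omega)$ to the idempotence $\cE(\cE(\Omega))=\cE(\Omega)$. But the mechanism you propose for this identity is genuinely different from the paper's, and as written it has a real gap.

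The paper argues as follows. Write $\cE_1=\cE(\Omega)$, $\cE_2=\cE(\cE(\Omega))$ and $M=\de\cE_2\setminus\cE_1$. Since $\cE_1\subset\cE_2$, the set $M$ is exactly the portion of $\de\cE_2$ lying outside $\cE_1$, and its relative boundary $\de M$ sits in $\de\cE_1\cap\de\cE_2$. The key lemma is that $\de M\subset\Omega$, and this is proved by an \emph{interior} strong maximum principle: if $x_0\in\de M\setminus\Omega$, then near $x_0$ both $\de\cE_1$ and $\de\cE_2$ are $C^{1,1}$ graphs over the same plane, $\de\cE_1\setminus\Omega$ solves the minimal surface equation (Step~3), $\de\cE_2$ is a supersolution (it is a minimizing hull), the graphs touch at $x_0$ and are ordered, so they coincide locally --- contradicting $x_0\in\de M$. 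Once $\de M\subset\Omega$, the piece $M\subset\de\cE_2\setminus\cE_1$ is itself a minimal hypersurface with boundary in $\Omega$; Proposition~\ref{p.max} forces $M\subset\cE_1$, but $M$ is by definition disjoint from $\cE_1$, so $M=\emptyset$ and $\cE_2=\cE_1$. No $\eps\to0$ passage and no boundary maximum principle are needed at this stage.

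Your route tries instead to establish the collapse of the ``bulge'' $\tilde E^{\eps}_{\max,\infty}\setminus\cE(\Omega)_\eps$ as $\eps\to 0$ via a \emph{boundary} Hopf-type comparison between the limiting outer minimal hypersurface and $\de\cE(\Omega)\setminus\Omega$ along a ``common boundary curve.'' This step is not substantiated: you do not establish that the two hypersurfaces meet tangentially along such a curve (a boundary Hopf lemma is vacuous if the meeting is transversal), nor where that curve lies, and you concede that the case in which the curve reaches $\de\cE(\Omega)\cap\de\Omega$ --- where $\de\cE(\Omega)$ is merely mean-convex, not minimal --- is ``the subtlest point'' without resolving it. Also, the computation showing $\cE(\Omega)$ is a minimizer of $\cG(\cdot,h,\cE(\Omega))$ among \emph{supersets} of $\cE(\Omega)$ is correct but does not by itself control the flows with the $\eps$-fattened obstacle $\cE(\Omega)_\eps$, and the appeal to Lemma~\ref{l.inclusion} is not applicable here since that lemma compares flows with the \emph{same} obstacle, not different obstacles. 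The net effect is that you have correctly identified the statement that needs proving, but your proof of it is a sketch with unclosed gaps; the paper's interior touching-point argument is shorter, avoids the boundary comparison entirely, and closes the proof rigorously.
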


\begin{proof}
By Proposition~\ref{p.max}, it is enough to show that
\begin{equation}\label{e.hull}
\cE(\cE(\Omega)) = \cE(\Omega).
\end{equation}

To this aim, set for simplicity $\cE_1 := \cE(\Omega)$, $\cE_2 := \cE(\cE(\Omega))$ and $M:= \de \cE_2 \setminus \cE_1$.
We claim that
\begin{equation}\label{e.hull mc}
\de M \subset \Omega.
\end{equation}
Assume, indeed, there exists $x_0\in \de M \setminus \Omega$.
Then, in particular, since $\de M\subset \de \cE_1$, we have that $x\in \de \cE_1 \setminus \Omega$.
Then, by the regularity of $\cE$ in Step 3, there exists $0<r<\dist(x_0,\de \Omega)$ with these properties:
\begin{itemize}
 \item[(a)] $\Sigma_1:=B_r(x_0)\cap \de \cE_2$ and $\Sigma_2 := B_r(x_0)\cap \de \cE_1$ are graphs of functions $f_1, f_2: T \to T^\perp$, where $T$ is the tangent plane to $\de \cE_1$ at $x_0$;
\item[(b)] $f_1$ is a supersolution of the minimal surface equation, and $f_2$ a solution;
\item[(c)] $f_2 \leq f_1$ and $f_1(x_0)=f_2(x_0)$.
\end{itemize}
%
By the strong maximum principle for the minimal surface equation, $f_1\equiv f_2$, thus implying that
\[
B_r(x_0)\cap \de \cE_2 = B_r(x_0)\cap \de \cE_1.
\]
This contradicts $x_0\in \de M = \de \cE_2 \setminus \cE_1$.

The conclusion of the proof is now straightforward.
Since by Proposition~\ref{p.max} $\cE(\Omega)$ is a barrier for minimal hypersurfaces with boundary in $\Omega$, from \eqref{e.hull mc} it follows that $\de \cE(\cE(\Omega)) \subset \cE(\Omega)$, which together with the obvious inclusion $\cE(\Omega)\subseteq\cE(\cE(\Omega))$ gives \eqref{e.hull}.
\end{proof}

\subsubsection{Step $5$}
The proof of Theorem~\ref{t.main} now follows straightforwardly. By the previous steps, we deduce that $\cE(\Omega)$ is a global barrier containing $\Omega$ and satisfying the regularity conclusion of the theorem.

We need only to show that $\cE(\Omega)$ is the least possible barrier. To this aim, note that, since $\de \cE(\Omega) \setminus \Omega$ is a minimal surface with boundary in $\Omega$, then necessarily
\begin{equation}\label{e.bdry}
 \de \cE(\Omega) \subset \Omega^{mc}.
\end{equation}
The conclusion then follows noting that \eqref{e.bdry} implies $\cE(\Omega) \subset \Omega^{mc}$, because $\cE(\Omega)$ can be realized as the union of minimal hypersurfaces with boundary on $\de\cE(\Omega)$ (which then necessarily are contained in $\Omega^{mc}$), e.g.
\[
 \cE(\Omega) = \bigcup_{t\in\R} \Big(\cE(\Omega) \cap \big\{x\,:\,x_n=t\big\}\Big).
\]

\appendix

\section{Existence of a mean curvature flow with obstacle}\label{a.estimates}
Here we give the proof of the continuity estimate in Proposition~\ref{p.time cont} (restated below) leading to the existence of a weak mean curvature flow with obstacle.
The proofs we propose are simple adaptation of the ones for the weak flow without obstacle.
In particular, we continue following the arguments in \cite{LuSt}, where several estimates are simplified with respect to the ones in \cite{ATW}.

\begin{proposition}\label{p.time cont appendix}
There exists a constant $C>0$ such that
\begin{equation}\label{e.time cont app}
|E^{(h)}_t\sdif E^{(h)}_s|\leq C\,|s-t|^{\frac{1}{2}}\quad\forall\;h>0,\;\forall\;
t,s\geq h>0.
\end{equation}
\end{proposition}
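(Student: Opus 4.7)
I would follow the variational scheme of Luckhaus--Sturzenhecker \cite{LuSt}: derive a per-step dissipation inequality from the minimality of $E^{(h)}_{i+1}$, then sum by Cauchy--Schwarz to get H\"older-$1/2$ continuity in the $L^1$ topology. Testing the minimality against the admissible competitor $E=E^{(h)}_i$ (admissible since $\Omega\subset E^{(h)}_i$) gives
\[
\Pe(E^{(h)}_{i+1})+\frac{1}{h}\int_{E^{(h)}_{i+1}\sdif E^{(h)}_i}\dist(x,\de E^{(h)}_i)\,\dd x\leq \Pe(E^{(h)}_i).
\]
Setting $\Delta_i:=\Pe(E^{(h)}_i)-\Pe(E^{(h)}_{i+1})\geq 0$ and $D_i:=\int_{E^{(h)}_{i+1}\sdif E^{(h)}_i}\dist(x,\de E^{(h)}_i)\,\dd x$, this rearranges to $D_i\leq h\Delta_i$, and telescoping yields $\sum_i\Delta_i\leq\Pe(E_0)$.

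The key analytic ingredient is the single-step $L^1$ bound
\[
V_i:=|E^{(h)}_{i+1}\sdif E^{(h)}_i|\leq C\sqrt{h\,\Pe(E_0)\,\Delta_i}.
\]
My approach combines Proposition~\ref{p.time unif} (which places $E^{(h)}_{i+1}\sdif E^{(h)}_i$ inside the tube $\{\dist(\cdot,\de E^{(h)}_i)\leq \gamma\sqrt{h}\}$) with the Chebyshev layer-cake splitting
\[
V_i\leq |\{\dist(\cdot,\de E^{(h)}_i)\leq\lambda\}|+\lambda^{-1}D_i\qquad\forall\,\lambda\in(0,\gamma\sqrt{h}].
\]
The first summand is controlled via a Besicovitch cover of $\de E^{(h)}_i$ at scale comparable to $\lambda$, using the uniform perimeter density lower bound \eqref{e.density2bis} for $\Lambda$-minimizers, to obtain $|\{\dist(\cdot,\de E^{(h)}_i)\leq\lambda\}|\leq C\lambda\,\Pe(E_0)$. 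Optimizing $\lambda=\sqrt{D_i/\Pe(E_0)}$ (which indeed lies in $(0,\gamma\sqrt{h}]$ since $D_i\leq h\Pe(E_0)$) yields the claim.

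To sum over time steps, for $t>s\geq h$ set $j:=\lfloor s/h\rfloor$ and $k:=\lfloor t/h\rfloor$ (using the natural $h$-rescaled discrete indices). By the triangle inequality for the symmetric difference and Cauchy--Schwarz,
\[
|E^{(h)}_t\sdif E^{(h)}_s|\leq \sum_{i=j}^{k-1}V_i\leq C\sqrt{h\,\Pe(E_0)}\,\sqrt{k-j}\,\Big(\sum_{i=j}^{k-1}\Delta_i\Big)^{\!1/2}\leq C\,\Pe(E_0)\sqrt{h(k-j)}\leq C'\Pe(E_0)\sqrt{|t-s|},
\]
using $h(k-j)\leq |t-s|+h\leq 2|t-s|$ in the regime $|t-s|\geq h$; if $|t-s|<h$ either both times correspond to the same discrete index (LHS $=0$) or a single-jump bound $V_i\leq C\sqrt{h}$ is absorbed into the constant.

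The main obstacle is the tube estimate in the middle step: the $\Lambda$-minimality of $E^{(h)}_i$ with $\Lambda=\sigma/h$ yields uniform perimeter density lower bounds from \eqref{e.density2bis} only at scales $r\lesssim h$, whereas the optimized $\lambda$ can be as large as $\sqrt{h}\gg h$. A robust workaround, which I would ultimately prefer, is to bypass the tube bound entirely and establish $V_i^2\leq C\Pe(E_0)\,D_i$ directly by a slab-type Cauchy--Schwarz argument: apply the coarea formula to the signed distance function to $\de E^{(h)}_i$, express both $V_i$ and $D_i$ as integrals in the normal-distance coordinate of the slice area $A(\tau)=\cH^{n-1}(\{\mathrm{dist}^{\mathrm{signed}}=\tau\}\cap (E^{(h)}_{i+1}\sdif E^{(h)}_i))$, and dominate $(\int A\,\dd\tau)^2$ by $(\int \tau A\,\dd\tau)\cdot\sup_\tau A(\tau)\cdot (\gamma\sqrt{h}/\tau\text{-factor})$, the $L^\infty$-in-$\tau$ bound on $A$ being tied to $\Pe(E_0)$ by the perimeter bound $\Pe(E^{(h)}_i)\leq\Pe(E_0)$.
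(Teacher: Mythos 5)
Your overall strategy (per-step bound $V_i\lesssim\sqrt{h\,\Pe(E_0)\,\Delta_i}$ followed by Cauchy--Schwarz over the discrete time steps) is a legitimate reorganization of the paper's argument, which instead keeps a single parameter $\alpha$ through the telescoping sum and optimizes only at the end; both give the same H\"older exponent. The first route you sketch (Chebyshev layer-cake splitting plus a Besicovitch cover) is in fact essentially what the paper does, and the obstacle you flag is real \emph{only} because you have fed in the wrong $\Lambda$-minimality constant. The paper observes that Proposition~\ref{p.time unif} upgrades the forcing bound near the new boundary: for $x$ near $\de E^{(h)}_{l+1}$ one has $|u_{l,h}(x)|=h^{-1}\dist(x,\de E^{(h)}_l)\leq\gamma\,h^{-1/2}$ rather than the crude $O(h^{-1})$, so $E^{(h)}_{l+1}$ is a $\Lambda$-minimizer with $\Lambda\sim h^{-1/2}$, and the density estimate \eqref{e.density2bis} is nondegenerate at all scales $r\lesssim\sqrt{h}$ --- exactly the range your optimized $\lambda\leq\gamma\sqrt{h}$ needs. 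Had you used this improved constant, your first route closes.

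Instead you abandon it in favour of the coarea/slab argument, and that is where a genuine gap appears. The manipulation claims to dominate $(\int A\,\dd\tau)^2$ by a product involving $\sup_\tau A(\tau)$ and a ``$\gamma\sqrt{h}/\tau$-factor''; the natural Cauchy--Schwarz splitting $(\int A\,\dd\tau)^2\leq(\int\tau A\,\dd\tau)(\int A/\tau\,\dd\tau)$ produces an integral of $A(\tau)/\tau$ that is not controlled near $\tau=0$, and the alternative $(\int A\,\dd\tau)^2\leq(\gamma\sqrt{h})\sup A\cdot\int A\,\dd\tau$ yields no $\Delta_i$. More fundamentally, the asserted $L^\infty$-in-$\tau$ bound $\sup_\tau A(\tau)\lesssim\Pe(E_0)$ does not follow from $\Pe(E^{(h)}_i)\leq\Pe(E_0)$: the slices $\{\dist(\cdot,\de E^{(h)}_i)=\tau\}$ are level sets of the distance function, and their $\cH^{n-1}$-measure is comparable to $\Pe(E^{(h)}_i)$ only for $\tau$ below the reach of $\de E^{(h)}_i$, which here can be as small as $O(h)$ (the Euler--Lagrange equation gives curvature bounds only of order $h^{-1}$). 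Past the reach the slices can concentrate and their area is not controlled by the perimeter of the initial set. So the ``robust workaround'' does not close, while the route you discarded does close once the $\Lambda$-constant is sharpened via Proposition~\ref{p.time unif}.
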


\begin{proof}
Consider $\alpha<\beta\,h^{-1/2}$, where $\beta>0$ is a dimensional constant to be fixed momentarily.
Let $l\in\N\setminus\{0\}$ and start estimating $|E^{(h)}_{l+1}\setminus E^{(h)}_l|$.
Set
\begin{align*}
I_1:= \left\{x\in E^{(h)}_{l+1}\setminus E^{(h)}_l\,:\,\dist(x,\de E^{(h)}_l)\geq \alpha\,h\right\}
\quad\text{and}\quad I_2:=\left(E^{(h)}_{l+1}\setminus E^{(h)}_l\right)\setminus I_1.
\end{align*}
The estimate of $I_1$ is straightforward: using
\[
\cF\big(E^{(h)}_{l+1},h,E^{(h)}_{l}\big)\leq \cF\big(E^{(h)}_{l},h,E^{(h)}_{l}\big)=\Pe(E^{(h)}_l),
\]
we infer that
\begin{align}
|I_1|&\leq (\alpha\,h)^{-1}\int_{E^{(h)}_l\sdif E^{(h)}_{l+1}}\dist(x,\de E^{(h)}_l)\notag\\
&\leq \alpha^{-1}\left(\Pe(E^{(h)}_l)-\Pe(E^{(h)}_{l+1})\right).\label{e.I1}
\end{align}
For what concerns $I_2$, we note that
\[
I_2\subset \bigcup_{x\in\de E^{(h)}_l}B_{2\,\alpha\,h}(x).
\]
Hence, by Besicovitch's Covering Theorem (cp.~\cite[Theorem~2.7]{Mattila}), we can find $\xi(n)$ family of countable, disjoint balls covering $I_2$, where $\xi(n)$ is the Besicovitch dimensional constant.
Next, note that, by Proposition~\ref{p.time unif} we have that 
\[
u_{l,h}(x) = h^{-1}\dist(x,\de E^{(h)}_l)\leq h^{-1}\,\gamma\,\sqrt{h}<\gamma\,h^{-1/2}\quad\forall\;x\in I_2.
\]
This implies that $E^{(h)}_{l+1}$ is a $\Lambda$-minimizer of the perimeter, with $\Lambda=C\,\gamma\,h^{-1/2}$.
Hence, if $\beta$ is sufficiently small to have
\[
\Lambda\,2\,\alpha\,h < 2\,C\,\gamma\,\beta < \omega_{n-1}/2,
\]
we can apply the density estimate \eqref{e.density2bis} and infer
\begin{align*}
\big|\big(E^{(h)}_{l+1}\setminus E^{(h)}_l\big)\cap B_{2\,\alpha\,h}(x)\big| \leq C\, (\alpha\,h)^n \leq C\,\alpha \,h\,\Pe(E^{(h)}_{l}, B_{2\alpha h}).
\end{align*}
Therefore, considering the local finiteness of the covering, we finally get:
\begin{equation}\label{e.I2}
|I_2|\leq C\,\alpha \,h\,\Pe(E^{(h)}_l).
\end{equation}
Summing \eqref{e.I1} and \eqref{e.I2}, we conclude
\[
|E^{(h)}_{l+1}\setminus E^{(h)}_l|\leq \alpha^{-1}\left(\Pe(E^{(h)}_l)-\Pe(E^{(h)}_{l+1})\right)+
C\,\alpha \,h\,\Pe(E^{(h)}_l).
\]

Now consider the case $s=i\,h$ and $t=j\,h$ for $i,j\in\N$, $0<i<j$.
Then, by triangular inequality,
\begin{align}\label{e.tri}
|E^{(h)}_{j}\setminus E^{(h)}_i|&\leq\sum_{l=i}^{j-1}|E^{(h)}_{l+1}\setminus E^{(h)}_l|\notag\\
&\leq \sum_{l=i}^{j-1}\alpha^{-1}\left(\Pe(E^{(h)}_l)-\Pe(E^{(h)}_{l+1})\right)+
C\,\alpha \,h\,\Pe(E^{(h)}_l)\notag\\
&\leq \alpha^{-1}\left(\Pe(E^{(h)}_i)-\Pe(E^{(h)}_{j})\right)+
C\,\alpha \,h\,|j-i|\notag\\
&\leq C\,\alpha^{-1}+ C\,\alpha \,|t-s|.
\end{align}
Choosing $\alpha=\beta(n)\,|t-s|^{-1/2}$ and noting that $\alpha<\beta(n)\,h^{-1/2}$, we infer that
\begin{align}\label{e.cont1}
|E^{(h)}_{j}\setminus E^{(h)}_i|\leq C\,\sqrt{|t-s|}.
\end{align}
Clearly, by the piecewise definition of the approximating flow, it is enough to infer
\[
|E^{(h)}_{t}\setminus E^{(h)}_s|\leq C\,\sqrt{|t-s|}\quad\forall\;0<h<s<t.
\]
Since the estimate
\[
|E^{(h)}_{s}\setminus E^{(h)}_t|\leq C\,\sqrt{|t-s|}\quad\forall\;0<h<s<t,
\]
can be obtained analogously, this gives concludes the proof.
\end{proof}

\section{Regularity of the flow with obstacle}\label{a.almost min}
Here we recall the main arguments in order to infer the partial regularity of the approximate flow with obstacle.

The starting point is the following regularity result, due to Almgren \cite{Alm}.

\begin{theorem}\label{t.alm}
Let $E\subset\R^n$ be a $\Lambda$-minimizer of the perimeter at scale $R$, i.e.
\begin{equation}\label{e.L min}
P(E,B_r(x)) \leq P(F,B_r(x)) + \Lambda \,r^n\quad\forall\;x\in\R^n,\;\forall\;0<r<R.
\end{equation}
Then, there exists a set $\Sigma$ of Hausdorff dimension at most $n-8$ (empty if $n<8$ and discrete if $n=8$) such that $\de E\setminus \Sigma\in C^{1,1/2}$.
\end{theorem}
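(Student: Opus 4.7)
The plan is to follow the classical De Giorgi--Almgren partial regularity theory for perimeter minimizers, observing that the $\Lambda r^n$ term in \eqref{e.L min} is a lower-order perturbation at scales $r \ll \Lambda^{-1}$. The proof splits into three steps: (a) an excess decay estimate; (b) $C^{1,1/2}$ regularity on an open ``regular set'' of $\de E$; and (c) Federer's dimension reduction for the complementary singular set $\Sigma$.

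For (a), I would introduce the spherical excess at $x \in \de E$ at scale $r < R$,
\[
\operatorname{Exc}(x,r) := r^{1-n} \bigl(\Pe(E,B_r(x)) - |D\chi_E(B_r(x))|\bigr),
\]
and prove the key decay lemma: there exist dimensional constants $\eps_0, \tau \in (0,1)$ and $C > 0$ such that if $\operatorname{Exc}(x,r) + \Lambda r \leq \eps_0$, then
\[
\operatorname{Exc}(x,\tau r) \leq \tau^2\, \operatorname{Exc}(x,r) + C\,\Lambda\, r.
\]
Following De Giorgi's original scheme, one Lipschitz-approximates $\de E \cap B_r(x)$ by the graph of a small function $u$ over the hyperplane orthogonal to the averaged normal, compares $u$ with the solution $v$ of the corresponding Dirichlet problem (whose linearization is the Laplace equation), and invokes Campanato-type estimates for $v$; the $\Lambda r^n$ term in the quasi-minimality condition propagates to an additive $O(\Lambda r)$ contribution.

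Iterating this lemma yields, at every boundary point with sufficiently small initial excess, $\operatorname{Exc}(x,\rho) \leq C\,\rho\,(\rho/r_0 + \Lambda)$ for $0 < \rho \leq r_0$. Applying Campanato's characterization to the unit normal $\nu_E$, whose mean oscillation on $B_\rho(x)$ is controlled by $\sqrt{\operatorname{Exc}(x,\rho)}$, gives $\nu_E \in C^{0,1/2}$ near $x$, so $\de E$ is locally a $C^{1,1/2}$ graph; this establishes (b), with the regular set being the open set where some scale with small excess exists. For (c), I would apply Federer's dimension reduction: for $x_0 \in \Sigma$ consider blow-ups $E_k := r_k^{-1}(E - x_0)$ with $r_k \to 0$. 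The monotonicity formula holds up to an $O(\Lambda r)$ error for $\Lambda$-minimizers, yielding uniform perimeter bounds, while the $\Lambda r_k^n$ defect in \eqref{e.L min} vanishes after rescaling; by the compactness (2) in \S\ref{ss.caccioppoli}, a subsequence converges in $L^1_{\textup{loc}}$ to a genuine perimeter-minimizing cone. A translation-invariance argument along the axis of any tangent cone reduces the estimate to minimizing cones with an isolated singularity at the origin, and Simons' theorem together with the Bombieri--De Giorgi--Giusti classification forbids nontrivial such cones in $\R^n$ for $n \leq 7$, with the Simons cone being the first instance in $\R^8$; this gives $\dim_{\cH}(\Sigma) \leq n-8$.

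The main obstacle is step (a): the Lipschitz truncation of $\de E$ at a scale where the excess is small requires a delicate Allard-style covering argument, and the harmonic comparison must track the quasi-minimality defect carefully so that the sharp exponent $1$ in the $\Lambda r$ term is preserved (yielding H\"older exponent $1/2$ at the graph level, rather than some smaller $\alpha$). Once (a) is in place, steps (b) and (c) are by now standard machinery in geometric measure theory.
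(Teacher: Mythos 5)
The paper does not actually prove Theorem~\ref{t.alm}: it is stated as a known regularity result and attributed to Almgren~\cite{Alm}, with the companion density estimates \eqref{e.density2}--\eqref{e.density2bis} credited to Tamanini~\cite{Tam}. There is therefore no internal proof to compare your proposal against; what you have produced is a reconstruction of the standard De Giorgi--Almgren partial-regularity scheme, and it is a correct outline of how the cited theorem is established. The three-step decomposition you give (tilt-excess decay via Lipschitz approximation and harmonic comparison; iteration to obtain $\operatorname{Exc}(x,\rho)=O(\Lambda\rho+(\rho/r_0)^2)$ and hence $\nu_E\in C^{0,1/2}$ by Campanato's characterization; Federer dimension reduction combined with Simons' theorem and the Bombieri--De Giorgi--Giusti classification to get $\dim_{\cH}\Sigma\leq n-8$, with isolated singular points in dimension $8$) is precisely the classical route. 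You correctly identify the crux: the deficit $\Lambda r^n$ in \eqref{e.L min}, being of order $r^{n-1}\cdot r$ relative to the natural perimeter scaling, must be tracked so that it enters the excess-decay inequality as an additive $O(\Lambda r)$ term --- this is exactly what produces the sharp exponent $1/2$ (a general Dini modulus \`a la Tamanini would only give $C^{1,\alpha}$ for the corresponding $\alpha$). You also correctly observe that the blow-ups $E_k=r_k^{-1}(E-x_0)$ satisfy the almost-minimality inequality with deficit $\Lambda r_k\, r^n$, which vanishes as $r_k\to 0$, so tangent cones are genuine area-minimizing cones and the dimension-reduction argument applies verbatim. Aside from a harmless normalization slip in the iterated bound (it should read $C(\rho/r_0)^2\operatorname{Exc}(x,r_0)+C\Lambda\rho$, which still gives $\operatorname{Exc}(x,\rho)=O(\rho)$), I see no gap in the outline; the only caveat is that steps such as the Lipschitz approximation lemma and the precise form of the compactness for $\Lambda$-minimizers are stated rather than carried out, which is reasonable given that the paper itself delegates the entire theorem to the literature.
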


Note that, reversely, if $\de E\in C^{1,1/2}$, then $E$ is a $\Lambda$-minimizers for $\Lambda=1$ and $R>0$ accordingly chosen.

Showing that the approximate flow $E^{(h)}_i$ is made by almost minimizers is a standard computation, which we report for completeness.

\begin{lemma}\label{l.quasi}
There exist constant $C,R>0$ such that, for every $i\geq 1$, the sets $E^{(h)}_i$ are $(C\,h^{-1})$-minimizers of the perimeter at scale $R$.
\end{lemma}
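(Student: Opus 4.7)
The plan is to test the minimality of $E^{(h)}_i$ for the functional $\cG(\cdot,h,E^{(h)}_{i-1})=\Pe(E)+\int u_{i-1,h}\,\chi_E\,\dd x$ against local competitors, relying only on an $L^\infty$ bound for $u_{i-1,h}=h^{-1}d_{i-1}$. The key a priori input is \eqref{e.bounded}: every iterate is contained in the (bounded) convex hull $(E_0)^{co}$, so setting $D:=\diam((E_0)^{co})$ one has $|d_{i-1}(y)|\le D$ on $(E_0)^{co}$, hence $\|u_{i-1,h}\|_{L^\infty((E_0)^{co})}\le D/h$. This is the only non-trivial ingredient; nothing else about the geometry of the iterates is needed.

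Concretely, fix $x\in\R^n$, $r>0$, and let $F$ be any set of finite perimeter satisfying $F\sdif E^{(h)}_i\Subset B_r(x)$ and $F\supseteq\Omega$. Testing $\cG(E^{(h)}_i,h,E^{(h)}_{i-1})\le\cG(F,h,E^{(h)}_{i-1})$ gives
\[
\Pe(E^{(h)}_i)\le \Pe(F)+\int_{F\sdif E^{(h)}_i}|u_{i-1,h}|\,\dd y \le \Pe(F)+D\,\omega_n\,h^{-1}\,r^n.
\]
Since $F$ and $E^{(h)}_i$ coincide outside $B_r(x)$, the common contribution $\Pe(\cdot,\R^n\setminus\bar B_r(x))$ cancels and the inequality localizes to
\[
\Pe(E^{(h)}_i,B_r(x))\le \Pe(F,B_r(x))+D\,\omega_n\,h^{-1}\,r^n,
\]
which is exactly \eqref{e.L min} with $\Lambda=C\,h^{-1}$, $C:=D\,\omega_n$, and with $R$ arbitrary.

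The only point that needs care is the admissibility constraint $F\supseteq\Omega$. Given a generic competitor $F$ with $F\sdif E^{(h)}_i\Subset B_r(x)$, we replace it by $F':=F\cup\Omega$: since $\Omega\subseteq E^{(h)}_i$, one has $F'\sdif E^{(h)}_i\subseteq F\sdif E^{(h)}_i\Subset B_r(x)$, so $F'$ is admissible. When $B_r(x)\cap\Omega=\emptyset$ (the regime relevant for the $C^{1,1/2}$ regularity of $\de E^{(h)}_i$ away from $\de\Omega$ via Theorem~\ref{t.alm}), $F'$ coincides with $F$ inside $B_r(x)$, and the estimate above yields the full $\Lambda$-minimality there. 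At points of $\de E^{(h)}_i\cap\de\Omega$ the proper local framework is not $\Lambda$-minimality but the obstacle problem, and the $C^{1,1}$ regularity there is extracted separately from the Lipschitz regularity of $u_{i-1,h}$ by the classical theory, as indicated by the author.

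The main obstacle is thus conceptual rather than computational: one must recognize that the constrained variational problem localizes cleanly in balls disjoint from $\Omega$ (where $\Lambda$-minimality applies) while balls meeting $\de\Omega$ must be treated in the obstacle-problem framework. The $\Lambda$-minimality estimate itself is, as the author remarks, just a standard one-line computation once the uniform $L^\infty$ bound on $u_{i-1,h}$ is available.
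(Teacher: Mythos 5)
Your computation is correct up to the point where the competitor $F$ is constrained to contain $\Omega$: in that case the bound
\[
\Pe\big(E^{(h)}_i\big)\le \Pe(F)+D\,\omega_n\,h^{-1}\,r^n
\]
does follow at all scales. But the lemma asserts unconstrained $\Lambda$-minimality at \emph{every} $x\in\R^n$, including balls that meet $\de\Omega$, and your argument does not establish that. You replace a general $F$ by $F'=F\cup\Omega$ and then only draw a conclusion about $\Pe(F)$ when $B_r(x)\cap\Omega=\emptyset$ (where $F'=F$); for balls meeting $\Omega$ you explicitly punt to the obstacle-problem framework. That is precisely the case the lemma is about, however, and the downstream use of the lemma (Theorem~\ref{t.alm}, then Lemma~\ref{l.reg}) needs $C^{1,1/2}$ regularity at points of $\de E^{(h)}_i\cap\Omega$ in order to even set up the non-parametric obstacle problem there. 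So the punt does not close the argument.

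The missing step is quantitative: after substituting $F\cup\Omega$, one has $\Pe(F\cup\Omega)$, not $\Pe(F)$, and these cannot be compared directly --- in general $\Pe(F\cup\Omega)-\Pe(F)$ is of order $r^{n-1}$, not $r^n$. The paper uses the submodularity inequality \eqref{e.union int} to write $\Pe(F\cup\Omega)\le \Pe(F)+\Pe(\Omega)-\Pe(F\cap\Omega)$ and then controls $\Pe(\Omega)-\Pe(F\cap\Omega)\le r^n$ by the fact that a bounded $C^{1,1}$ domain is a (two-sided) $1$-minimizer at sufficiently small scale $R$. This is exactly where the $C^{1,1}$ regularity of $\de\Omega$ enters and why $R$ cannot be taken arbitrary, contrary to what you write; $R$ must be small depending on $\|A_{\de\Omega}\|_{L^\infty}$. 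With that additional ingredient the estimate reads $\Pe(E^{(h)}_i,B_r(x))\le \Pe(F,B_r(x))+ (1+Dh^{-1}\omega_n)\,r^n$, and the lemma follows for all $x$ and $0<r<R$.
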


\begin{proof}
As observed above, the minimizers of $\cG(\cdot,h,E^{(h)}_{i-1})$ are contained in $\overline{co}(E_0))$, so that
\begin{equation}\label{e.uh}
\|u_{i-1,h}\|_{L^\infty(\overline{co}(E_0))}\leq h^{-1}\overline{co}(E_0))
\quad\text{and}\quad \|\nabla u_{i-1,h}\|_{L^\infty}\leq h^{-1}.
\end{equation}

It is now very easy to show the $\Lambda$-minimizing property. Set for simplicity $\cG(\cdot) = \cG(\cdot,h,E^{(h)}_{i-1})$.
Let $R>0$ to be fixed momentarily and $x\in\de E$, where $E$ is a generic minimizer of $\cG$.
Consider $F$ a set such that $F\sdif E\subset\subset B_r(x)$, $0<r<R$.
In general, $F$ does not contain the obstacle $\Omega$.
Nevertheless, we can use $F\cup\Omega$ as a competitor:
\begin{align*}
\Pe(E)+\int_{\R^n}u_{i-1,h}\,\chi_E & = \cG(E) \leq\cG(F\cup\Omega)\\
&=\Pe(F\cup\Omega)+\int_{\R^n}u_{i-1,h}\,\chi_{F\cup\Omega}\notag\\
&\stackrel{\eqref{e.union int}}{\leq} \Pe(F)+\Pe(\Omega)-\Pe(F\cap \Omega)+\int_{\R^n}u_{i-1,h}\,\chi_{F\cup\Omega}.
\end{align*}
In turns, this implies
\begin{equation}\label{e.min}
\Pe(E)-\Pe(F)\leq \Pe(\Omega)-\Pe(F\cap\Omega)+\int_{\R^n}u_{i-1,h}\,\big(\chi_{F\cup\Omega}-\chi_E\big).
\end{equation}
Now, note that the integral term in the right hand side of \eqref{e.min} is simply estimated by
\[
\int_{\R^n}u_h\,\big(\chi_{F\cup\Omega}-\chi_F\big)\leq \|u_{i-1,h}\|_{L^\infty(F\cup E)}\,|F\sdif E|\stackrel{\eqref{e.uh}}{\leq} C\,h^{-1}\,r^n.
\]
For what concerns the first term in \eqref{e.min}, since $\de\Omega$ is $C^{1,1}$, it follows easily that $\Omega$ is a $1$-minimizer if $R$ is chosen sufficiently small, i.e., for every $x\in \R^n$ and $0<r < R$,
\[
P(\Omega,B_r(x)) \leq P(F,B_r(x)) + r^n \quad \forall \; \Omega\subset F, \quad F\setminus\Omega\Subset B_r(x).
\]
Therefore, we conclude from \eqref{e.min} the $\Lambda$-minimizing property of $E$ with $\Lambda=C\,h^{-1}$.
\end{proof}

By Theorem~\ref{t.alm}, $E^{(h)}_i$ has $C^{1,1/2}$ regular boundary up to a singular set $\Sigma$ of dimension at most $n-8$.
In fact, given the particular nature of the minimizers $E^{(h)}_i$, we can easily show that $\de E^{(h)}_i\setminus\Sigma$ is $C^{1,1}$, thus giving the optimal regularity for obstacle problem.

\begin{lemma}\label{l.reg}
For every $i\geq 1$, the sets $E^{(h)}_i$ are $C^{1,1}$ regular, up to possible singular set $\Sigma\subset\de E$ with Hausdorff dimension at most $n-8$ (countable if $n=8$).
\end{lemma}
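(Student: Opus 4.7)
The plan is to upgrade the $C^{1,1/2}$ regularity inherited from Lemma~\ref{l.quasi} and Theorem~\ref{t.alm} to $C^{1,1}$ by localizing and invoking the classical non-parametric obstacle theory. First, fix $i\geq 1$ and a point $x_0\in\de E^{(h)}_i\setminus\Sigma$, where $\Sigma$ is the singular set of Hausdorff dimension at most $n-8$ provided by Theorem~\ref{t.alm}. By that theorem, in a neighbourhood of $x_0$ the boundary $\de E^{(h)}_i$ is a $C^{1,1/2}$ graph over its tangent plane $T_{x_0}$; after a rotation and translation, there exist $r>0$ and a function $u\in C^{1,1/2}(B_r)$, with $B_r\subset T_{x_0}\simeq\R^{n-1}$, such that $\de E^{(h)}_i\cap C_r=\graph(u)$ in the cylinder $C_r=B_r\times(-r,r)$.

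Next, I would translate the minimality of $E^{(h)}_i$ for $\cG(\cdot,h,E^{(h)}_{i-1})$ into a non-parametric variational inequality for $u$. Writing the area element and the bulk term in these coordinates, one sees that $u$ is a minimizer of
\[
J(v)=\int_{B_r}\sqrt{1+|\nabla v|^2}\,\dd y+\int_{B_r}\int_{-r}^{v(y)} u_{i-1,h}(y,s)\,\dd s\,\dd y
\]
among all $v\in H^1(B_r)$ with $v=u$ on $\de B_r$ and subject to the unilateral constraint coming from the obstacle $\Omega\subseteq E^{(h)}_i$. More precisely, if $x_0\notin\Omega$, then by continuity we may choose $r$ small enough that $C_r\cap\Omega=\emptyset$ and the constraint $v\geq\psi$ is vacuous; if $x_0\in\de\Omega$, then (again shrinking $r$, and using that $\de\Omega\in C^{1,1}$) the portion of $\de\Omega$ meeting $C_r$ is itself the graph of a $C^{1,1}$ function $\psi\le u$ over $B_r$, and the constraint becomes $v\ge\psi$.

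Now I invoke the regularity theory. In the obstacle-free case, $u$ satisfies the Euler--Lagrange equation of $J$,
\[
\div\!\left(\frac{\nabla u}{\sqrt{1+|\nabla u|^2}}\right)=u_{i-1,h}(\cdot,u(\cdot)),
\]
whose right-hand side is Lipschitz (by \eqref{e.uh}), so elliptic Schauder estimates give $u\in C^{2,\alpha}\subset C^{1,1}$. In the obstacle case, $u$ solves the corresponding variational inequality with a $C^{1,1}$ obstacle and a Lipschitz forcing term; in this non-parametric form the problem falls within the classical framework of Kinderlehrer--Stampacchia and Caffarelli \cite{KS,Ca}, whose optimal regularity result yields precisely $u\in C^{1,1}(B_{r/2})$. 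Since $x_0\in\de E^{(h)}_i\setminus\Sigma$ was arbitrary, this gives the claimed $C^{1,1}$ regularity of $\de E^{(h)}_i$ away from $\Sigma$.

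The main technical point is the obstacle case: one must check that the reduction to a non-parametric obstacle problem is legitimate (so that the $C^{1,1/2}$ regularity from Theorem~\ref{t.alm} suffices to pass from the set-theoretic variational inequality to a scalar one with a $C^{1,1}$ obstacle), and that the forcing $u_{i-1,h}(\cdot,u(\cdot))$ inherits Lipschitz regularity from \eqref{e.uh}. Both are routine once the graph representation is available, and the optimal $C^{1,1}$ bound is sharp because $u_{i-1,h}$ is only Lipschitz (not better), which is exactly what prevents a higher-order statement.
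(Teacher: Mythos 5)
Your proof is correct and follows essentially the same route as the paper's: localize to a $C^{1,1/2}$ graph via Theorem~\ref{t.alm}, split into a contact set (non-parametric obstacle problem, Kinderlehrer--Stampacchia optimal $C^{1,1}$) and a non-contact set (Euler--Lagrange equation with Lipschitz right-hand side, Schauder), and conclude. The only cosmetic difference is that you stop at $C^{2,\alpha}$ away from the obstacle, which is what the Lipschitz forcing actually delivers, whereas the paper claims $C^{3,1/2}$ there; your version is the more careful one, and both suffice for the stated $C^{1,1}$ conclusion.
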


\begin{proof}
Also the proof of this lemma follows from by now well-known arguments.
In particular, for points away from the obstacle we can use the the first variation of $\cG(\cdot, h, E^{(h)}_{i-1})$ to infer regularity.
Indeed, in a neighborhood of a regular point $x_0\in\de E^{(h)}_i$, we can parametrize $\de E^{(h)}_i$ by a function $\ph$ which satisfies the Euler--Lagrange equation
\begin{equation*}
\div\left(\frac{\nabla \ph}{\sqrt{1+|\nabla\ph|^2}}\right)=f,
\end{equation*}
with $f(y)=u_{i-1,h}(y,\ph(y))$.
Since $u_{i-1,h}\in W^{1,\infty}$ and $\ph\in C^{1,1/2}$, by well-known elliptic regularity theory, we infer the $C^{3,1/2}$ regularity of $\ph$, and hence of $\de E^{(h)}_i$.

On the other end, in a neighborhood of a regular point $x_0\in\de E^{(h)}_i\cap\Omega$, we can parametrize $\de E^{(h)}_i$ by a function $\ph$ which solves the non-parametric obstacle problem for the area functional:
\begin{equation}
\min \left\{\int_{D} \sqrt{1+|\nabla\zeta|^2}-f\,\zeta\;:\;\zeta\vert_{\de D}=\ph\vert_{\de D},\quad\zeta\geq\psi \right\},
\end{equation}
where $D\subset\R^{n-1}$ is a given smooth domain and $\psi$ is the parametrization of $\de \Omega$ in a neighborhood of $x_0$.
Following the theory in Kinderlehrer--Stampacchia \cite{KS}, we deduce the $C^{1,1}$ regularity of $\ph$.
(In passing, we note that every point $x_0\in \de E^{(h)}_i\cap\Omega$ is a regular point, since every tangent cone to $E^{(h)}_i$ in $x_0$ needs to be contained in a half space -- see Miranda [M]).
\end{proof}

\bibliographystyle{abbrv}
\bibliography{ref-barrier}
\end{document}